\newcommand{\inv}{^{-1}}
\newcommand{\mcm}[3]{\newcommand{#1}[#2]{{\ensuremath{#3}}}} 
\mcm{\restric}{0}{\upharpoonright}
\numberwithin{equation}{section}
\newtheorem{theorem}[equation]{Theorem}
\newtheorem{lemma}[equation]{Lemma}
\newtheorem*{theorem*}{Theorem}
\theoremstyle{definition}
\newtheorem{defn}[equation]{Definition}
\theoremstyle{remark}
\title{Limits and Colimits in the Category of Pastures}
\author{Steven Creech}
\date{March 2021}
\begin{document}

\maketitle
\begin{abstract}
The category of pastures was introduced by Baker and Lorscheid \cite{baker2018moduli}, \cite{baker2021descartes}. In this paper, we shall show that the category of pastures will have arbitrary limits and colimits indexed by small diagrams. We shall do this by constructing fibered products, fibered coproducts, equalizers, coequalizers, arbitrary products, and arbitrary coproducts (indexed over a small category, i.e. a set). We begin by recalling the definition of pastures and the notion of morphisms of pastures. We then proceed to construct fibered products, fibered coproducts, equalizers, coequalizers, arbitrary products, and arbitrary coproducts.
\end{abstract}

\section{The Category of Pastures}

Let us recall the definitions that define the category of pastures. 

\begin{defn}
A \textit{pasture} is a multiplicative monoid with zero $P$ such that $P^\times=P-\{0\}$ is an abelian group. There is an involution $x\mapsto -x$ which fixes zero, and there is a subset $N_P\subset P^3$ called the \textit{nullset of $P$} (we write $a+b+c=0$ to mean that $(a,b,c)\in N_P$) that satisfies the following properties:
\begin{enumerate}
    \item $N_P$ is invariant under permutations
    \item $N_P$ is invariant under multiplication. That is for $a,b,c,d\in P$ if $a+b+c=0$, then $da+db+dc=0$.
    \item $a+b+0=0$ if and only if $a=-b$.
\end{enumerate}
\end{defn}

We shall view the multiplication of monoids in the following light. We have that $P^\times$ is an abelian group, and $0$ is an absorbing element.

\begin{defn}
A \textit{morphism of pastures} if a multiplicative map $f:P_1\rightarrow P_2$ such that $f(0)=0$, $f(1)=1$, $f(-a)=-f(a)$, and $f(a)+f(b)+f(c)=0$ in $N_{P_2}$ whenever $a+b+c=0$ in $N_{P_1}$ for any $a,b,c\in P_1$.
\end{defn}

In some sense the category of pastures is nicer than that of fields as it has initial and final objects as well as (co)products, fibered (co)products, and (co)equalizers. See \cite{baker2020foundations} for the initial object, final object, product, and coproduct. 

\section{fibered products}
In this section we shall define the fibered product of pastures, and we shall prove that this is the correct notion of the fibered product. Recall that given pastures $P_1$ and $P_2$ with maps $f_1:P_1\rightarrow P$ and $f_2:P_2\rightarrow P$, the fibered product is the pasture $P_1\times_P P_2$ along with maps $\pi_1:P_1\times_P P_2\rightarrow P_1$ and $\pi_2: P_1\times P_2\rightarrow P_2$ that satisfies the universal property that for any pasture $P'$ with morphisms $g_1:P'\rightarrow P_1$ and $g_2:P'\rightarrow P_2$ there is a unique morphsim $g_*:P'\rightarrow P_1\times_P P_2$ such that the following diagram commutes:

\begin{center}

\begin{tikzcd}
P'
\arrow[drr, bend left, "g_1"]
\arrow[ddr, bend right, "g_2"]
\arrow[dr, dotted, "g_*" description] & & \\
& P_1 \times_P P_2\arrow[r, "\pi_1"]\arrow[d, "\pi_2"]& P_1\arrow[d, "f_1"] \\
& P_2\arrow[r, "f_2"]& P
\end{tikzcd}
    
\end{center}

\begin{defn}
The \textit{fibered product of pastures $P_1$ and $P_2$ over the pasture $P$}, denoted $P_1\times_P P_2$, is defined as follows. As a set we have that $P_1\times_P P_2=0\cup \{(a,b): f_1(a)=f_2(b), a\in P_1^\times, b\in P_2^\times\}$. The multiplication is defined component wise with $0$ as an absorbing element and the involution is given by $(a,b)\mapsto (-a,-b)$ for nonzero elements. The additive relations are given by the following rules (and permutations of them):
\begin{itemize}
    \item $0+0+0=0$
    \item $(a,b)+(-a,-b)+0=0$ for $(a,b)\in (P_1\times_P P_2)^\times$
    \item For $(a_1,b_1)+(a_2,b_2)+(a_3,b_3)=0$ if and only if $a_1+a_2+a_3=0$ in $P_1$ and $b_1+b_2+b_3=0$ in $P_2$ for $(a_1,b_1),(a_2,b_2),(a_3,b_3)\in (P_1\times_P P_2)^\times$
\end{itemize}
The map $\pi_1$ is given by $\pi_1(0)=0$ and $\pi_1((a,b))=a$. Similarly, we have that $\pi_2(0)=0$ and $\pi_2((a,b))=b$
\end{defn}

We will now show that this is indeed the correct notion of the fibered product. We shall do this in $3$ lemmas. First we will show that $P_1\times_P P_2$ is indeed a pasture. Second, we will show that $\pi_1$ and $\pi_2$ are morphisms of pastures such that $f_1\circ \pi_1=f_2\circ \pi_2$. Finally, we shall show that $P_1\times_P P_2$ satisfies the universal property of fibered products.  

\begin{lemma}
$P_1\times_P P_2$ as we have defined is indeed a pasture.
\end{lemma}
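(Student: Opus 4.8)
The plan is to verify each clause of the definition of a pasture for the proposed structure $P_1\times_P P_2$ in turn, treating the multiplicative part first and the nullset axioms second. For the multiplicative structure, I would first check that the underlying set is closed under the component-wise multiplication: if $(a_1,b_1),(a_2,b_2)$ both satisfy $f_1(a_i)=f_2(b_i)$, then since $f_1,f_2$ are monoid homomorphisms we get $f_1(a_1a_2)=f_1(a_1)f_1(a_2)=f_2(b_1)f_2(b_2)=f_2(b_1b_2)$, so $(a_1a_2,b_1b_2)$ again lies in the fibered product; the element $(1,1)$ works as the identity since $f_1(1)=1=f_2(1)$, and the inverse of $(a,b)$ is $(a^{-1},b^{-1})$, which is legal because $f_1(a^{-1})=f_1(a)^{-1}=f_2(b)^{-1}=f_2(b^{-1})$. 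Hence $(P_1\times_P P_2)^\times$ is an abelian group, $0$ is absorbing by fiat, and so $P_1\times_P P_2$ is a multiplicative monoid with zero whose nonzero part is an abelian group. I would also note that the involution $(a,b)\mapsto(-a,-b)$ is well-defined on the fibered product because $f_1(-a)=-f_1(a)=-f_2(b)=f_2(-b)$, that it fixes $0$, and that it is an involution since $-(-a)=a$ in $P_1$ and similarly in $P_2$.

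Next I would verify the three nullset axioms. Permutation invariance (axiom 1) is immediate from the symmetric phrasing of the defining rules together with permutation invariance of $N_{P_1}$ and $N_{P_2}$: the rule ``$0+0+0=0$'' is symmetric, the rule ``$(a,b)+(-a,-b)+0=0$'' is declared together with all its permutations, and in the three-nonzero-elements case, a permutation of $(a_1,b_1),(a_2,b_2),(a_3,b_3)$ induces the corresponding permutations of $a_1,a_2,a_3$ in $P_1$ and of $b_1,b_2,b_3$ in $P_2$, which stay in the respective nullsets. For multiplicative invariance (axiom 2), suppose $(a_1,b_1)+(a_2,b_2)+(a_3,b_3)=0$ and let $(d_1,d_2)\in (P_1\times_P P_2)^\times$; I split into cases according to which rule produced the relation. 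If the relation is $0+0+0=0$ the scaled relation is again $0+0+0=0$. If it is of the form $(a,b)+(-a,-b)+0=0$, scaling by $(d_1,d_2)$ gives $(d_1a,d_2b)+(-d_1a,-d_2b)+0=0$, which is again an instance of the second rule. If all three summands are nonzero, then $a_1+a_2+a_3=0$ in $P_1$ gives $d_1a_1+d_1a_2+d_1a_3=0$ by multiplicative invariance of $N_{P_1}$, and likewise $d_2b_1+d_2b_2+d_2b_3=0$ in $P_2$, so $(d_1a_1,d_2b_1)+(d_1a_2,d_2b_2)+(d_1a_3,d_2b_3)=0$ by the third rule. (One should also handle the degenerate possibility that scaling by a nonzero element could never produce a zero component, which is automatic since $P_1^\times, P_2^\times$ are groups.)

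For axiom 3, I need: $(a_1,b_1)+(a_2,b_2)+0=0$ if and only if $(a_1,b_1)=-(a_2,b_2)=(-a_2,-b_2)$. The ``if'' direction is exactly the second defining rule. For ``only if'', a relation with a zero summand can only arise from the ``$0+0+0$'' rule (which forces all entries zero, but then $0=-0$ holds trivially) or from the second rule, whose form directly gives $(a_1,b_1)=(-a_2,-b_2)$; the third rule does not apply since it requires all three summands nonzero. I expect the main subtlety — and the step I would write most carefully — to be this axiom-3 / axiom-2 casework, namely making sure that the list of defining rules ``and permutations of them'' really is exactly the generated nullset and that no additional relations are forced, so that there is no hidden inconsistency (for instance, that $(a,b)+(-a,-b)+0=0$ is consistent with the three-nonzero rule when we also happen to have $a+(-a)+0=0$ in $P_1$). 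Since the three-nonzero rule is an ``if and only if'' tied to the ambient nullsets $N_{P_1},N_{P_2}$ which themselves satisfy axiom 3, these are compatible, and the verification goes through.
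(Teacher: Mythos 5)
Your proposal is correct and takes essentially the same route as the paper's proof: verify that $(P_1\times_P P_2)^\times$ is an abelian group (closure, identity $(1,1)$, inverses $(a^{-1},b^{-1})$) using that $f_1,f_2$ are morphisms, check that the involution is well-defined via $f_1(-a)=-f_1(a)=-f_2(b)=f_2(-b)$, and then confirm the three nullset axioms by casework on which defining rule produced a given relation. If anything, you are slightly more careful than the paper on the ``only if'' direction of axiom (3) --- noting that a relation with a zero summand can only arise from the second defining rule --- a point the paper's proof passes over without comment.
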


\begin{proof}
We begin by proving that $P_1\times_P P_2$ is indeed a pasture. We shall first show that $(P_1\times_P P_2)^\times$ is an abelian group. 

We note that by definition we have that $0$ is an absorbing element under multiplication. Furthermore, I claim that $(1,1)$ is the multiplicative identity of $P_1\times_P P_2$. We know that this is indeed an element of $P_1\times_P P_2$ as $f_1$ and $f_2$ are morphisms, so $f_1(1)=1=f_2(1)$. For any other $(a,b)\in P_1\times_P P_2$ we have that $(1,1)(a,b)=(1(a),1(b))=(a,b)$, so $(1,1)$ is indeed the multiplicative identity.

Now let us show that nonzero elements have multiplicative inverses. Now say that $(a,b)\in (P_1\times_P P_2)^\times$; thus, we have that $f_1(a)=f_1(b)$ where $a\in P_1$ and $b\in P_2$. Since $f_1$ and $f_2$ are morphisms,we know that $f_1(a^{-1})=f_1(a)^{-1}$ and $f_2(b^{-1})=f_2(b)^{-1}$. Hence, we have that $f_1(a^{-1})=f_1(a)^{-1}=f_2(b)^{-1}=f_2(b^{-1})$. Thus, we have that $(a^{-1},b^{-1})\in P_1\times_P P_2$ and $(a,b)(a^{-1},b^{-1})=(aa^{-1},bb^{-1})=(1,1)$. 

We need to show that multiplication is closed in $(P_1\times_P P_2)^\times$. Say that $(a_1,b_1),(a_2,b_2)\in(P_1\times_P P_2)^\times$. We wish to show that $(a_1,b_1)(a_2,b_2)=(a_1a_2,b_1b_2)\in (P_1\times_P P_2)^\times$. This will be true if $f_1(a_1a_2)=f_2(b_1b_2)$. This is true as $f_1(a_i)=f_2(b_i)$ for $i=1,2$. Hence, we have that $f_1(a_1a_2)=f_1(a_1)f_1(a_2)=f_2(b_1)f_2(b_2)=f_2(b_1b_2)$. Furthermore, the fact that multiplication is commutative easily follows from the commutativity of multiplication in $P^1$ and $P^2$. This concludes the proof that $(P_1\times_P P_2)^\times$ is an abelian group. 

We know that the involution fixes $0$ and is indeed an involution. However, we must show that if $(a,b)\in(P_1\times_P P_2)^\times$, then $(-a,-b)\in (P_1\times_P P_2)^\times$. This follows since $f_1$ and $f_2$ are morphisms. Thus, we have that $f_1(-a)=-f_1(a)=-f_2(b)=f_2(-b)$. Thus, we have that $(-a,-b)\in (P_1\times_P P_2)^\times$.

Thus, it simply remains to show that the additive relations satisfy the properties of being a nullset. We note by definition, we have that the additive relations will be invariant under permutations which is property (1) of the nullset. Furthermore, as $(a,b)+(-a,-b)+0=0$ we have that property (3) of being a nullset is satisfied. 

To show that property (2) is satisfied if $(a_1,b_1)+(-a_1,-b_1)+0=0$, then for $(a_2,b_2)$ we have that $(a_2,b_2)(a_1,b_1)+(a_2,b_2)(-a_1,-b_2)+0=(a_2a_1,b_2b_1)+(a_2(-a_1),b_2(-b_2))+0=0$. This is true as we know that $a_1+(-a_1)+0=0$ in $P_1$ implies that $a_2a_1+a_2(-a_2)+0=0$ in $P_1$ (as additive relations in $P_1$ are closed under multiplication). Similarly, we know that $b_2b_1+b_2(-b_1)+0=0$ in $P_2$. Now we also wish to show that for $(a_1,b_1)+(a_2,b_2)+(a_3,b_3)=0$, then multiplying by $(a_4,b_4)$ also adds to $0$. This is indeed true as we have that $a_1+a_2+a_3=0$ in $P_1$, so $a_4a_1+a_4a_2+a_4a_3=0$ in $P_1$. Again, we similarly have that $b_4b_1+b_4b_2+b_4b_3=0$. This guarantees that $(a_4,b_4)(a_1,b_1)+(a_4,b_4)(a_2,b_2)+(a_4,b_4)(a_3,b_3)=0$. Hence, we have that the nullset satisfies property (2) which is the final property which we needed to show to prove that $P_1\times_P P_2$ is a pasture. 
\end{proof}

\begin{lemma}
$\pi_1$ and $\pi_2$ are morphisms of pastures such that $f_1\circ \pi_1=f_2\circ \pi_2$. That is the bottom square of the diagram commutes. 
\end{lemma}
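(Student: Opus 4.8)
The plan is to verify the three morphism axioms for each of $\pi_1$ and $\pi_2$ and then to check the relation $f_1\circ\pi_1 = f_2\circ\pi_2$ directly on elements. Since the two projections play symmetric roles, I would write out the argument for $\pi_1$ in full detail and then observe that the argument for $\pi_2$ is identical with the two coordinates interchanged.

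For multiplicativity, the unit axiom, and compatibility with the involution, everything is immediate from the componentwise definitions. We have $\pi_1(0)=0$ by fiat, and $\pi_1((1,1))=1$ where $(1,1)$ is the multiplicative identity of $P_1\times_P P_2$ established in the previous lemma. For a product of nonzero elements, $\pi_1\big((a_1,b_1)(a_2,b_2)\big)=\pi_1\big((a_1a_2,b_1b_2)\big)=a_1a_2=\pi_1((a_1,b_1))\,\pi_1((a_2,b_2))$, and the cases involving $0$ are trivial because $0$ is absorbing in both $P_1\times_P P_2$ and $P_1$. Finally $\pi_1(-(a,b))=\pi_1((-a,-b))=-a=-\pi_1((a,b))$.

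The main point to get right is preservation of the nullset, which I would handle by a case analysis matching the three clauses in the definition of $N_{P_1\times_P P_2}$ (and their permutations). If $0+0+0=0$, then applying $\pi_1$ yields $0+0+0=0$ in $P_1$. If $(a,b)+(-a,-b)+0=0$, then $\pi_1$ produces $a+(-a)+0$, which lies in $N_{P_1}$ by property (3) of a nullset. In the remaining case, the relation $(a_1,b_1)+(a_2,b_2)+(a_3,b_3)=0$ holds, by definition of the fibered product's nullset, precisely because $a_1+a_2+a_3=0$ in $P_1$ (and $b_1+b_2+b_3=0$ in $P_2$), so exactly the $P_1$-relation needed is available by hypothesis; permutations cause no trouble since $N_{P_1}$ is permutation-invariant. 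The argument for $\pi_2$ is word-for-word the same using the $b$-coordinates.

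Commutativity of the bottom square is then immediate from the definition of the underlying set of $P_1\times_P P_2$: on $0$ both composites return $0$, and on a nonzero element $(a,b)$ we have $f_1(\pi_1((a,b)))=f_1(a)=f_2(b)=f_2(\pi_2((a,b)))$, where the middle equality is precisely the condition $f_1(a)=f_2(b)$ defining membership of $(a,b)$ in $P_1\times_P P_2$. I do not expect any genuine obstacle; the only things to be careful about are invoking the previous lemma for the identity element and making sure the nullset case analysis is exhaustive.
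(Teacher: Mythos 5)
Your proof is correct and follows essentially the same route as the paper's: componentwise verification of the morphism axioms for $\pi_1$ (with $\pi_2$ by symmetry) and commutativity of the square read off directly from the defining condition $f_1(a)=f_2(b)$ for membership in $P_1\times_P P_2$. Your explicit case analysis of the three nullset clauses is slightly more detailed than the paper, which simply asserts that preservation of the additive relations is immediate from the definition, but the substance is the same.
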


\begin{proof}
Now let us show that $\pi_1$ and $\pi_2$ are morphisms of pastures. Without loss of generality, we shall simply show that $\pi_1$ is a morphism of pastures. 

Since $\pi_1(0)=0$ we have this property is satisfied and $\pi_1(1,1)=1$, so $\pi_1$ sends zero and the multiplicative identity to zero and the multiplicative identity, respectively. 

Let us show that $\pi_1$ is a group homomorphism of $(P_1\times_P P_2)^\times$. Say $(a_1,b_1),(a_2,b_2)\in (P_1\times_P P_2)^\times$, then 
\[
\pi_1((a_1,b_1)(a_2,b_2))=\pi_1(a_1a_2,b_1b_2)=a_1a_2=\pi_1(a_1,b_1)\pi_1(a_2,b_2)
\]
That is $\pi_1$ is a group homomorphism in the nonzero elements. 

Now Let us show the involution property, consider $(a,b)\in P_1\times_P P_2$, then $\pi_1(-(a,b))=\pi_1(-a,-b)=-a=-\pi_1(a,b)$; thus, we have the involution is preserved under $\pi_1$. Now it remains to show that $(a_1,b_1)+(a_2,b_2)+(a_3,b_3)=0$ implies that $\pi_1(a_1,b_1)+\pi_1(a_2,b_2)+\pi_1(a_3,b_3)=0$, but this follows immediately from how the additive relations are defined in $P_1\times_P P_2$. Thus, we have that $\pi_1$ and $\pi_2$ are morphisms. 

Now let us show that the diagram commutes, so let $(a,b)\in P_1\times_P P_2$, then $f_1\circ\pi_1(a,b)=f_1(a)$, and $f_2\circ \pi_2(a,b)=f_2(b)$, but we have that the $(a,b)\in P_1\times_P P_2$ are precisely the elements that $f_1(a)=f_2(b)$. Thus, the diagram commutes in the lower square. 
\end{proof}

Now it simply remains to show that $P_1\times_P P_2$ satisfies the universal property of the fibered product. We shall state that it satisfies the condition as a lemma.

\begin{lemma}
If $P'$ is a pasture and $g_1:P'\rightarrow P_1$ and $g_2:P'\rightarrow P_2$ are morphisms such that $f_1\circ g_1=f_2\circ g_1$, then there is a unique morphism $g_*:P'\rightarrow P_1\times_P P_2$ such that the entire diagram commutes.
\end{lemma}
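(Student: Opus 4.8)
The plan is to define $g_*$ in the only way compatible with the commutativity constraints and then check, in turn, that it is well defined, that it is a morphism of pastures, that it makes the whole diagram commute, and that it is the unique such morphism. Concretely, set $g_*(0)=0$ and $g_*(x)=(g_1(x),g_2(x))$ for $x\in (P')^\times$. The first thing to verify is that this lands in $(P_1\times_P P_2)^\times$. For this I would record the elementary fact that a morphism of pastures sends nonzero elements to nonzero elements: if $x\in (P')^\times$ then $g_1(x)g_1(x^{-1})=g_1(1)=1$, so $g_1(x)\in P_1^\times$, and likewise $g_2(x)\in P_2^\times$. Since $f_1\circ g_1=f_2\circ g_2$ by hypothesis, $f_1(g_1(x))=f_2(g_2(x))$, which is exactly the condition for $(g_1(x),g_2(x))$ to be an element of the fibered product.

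Next I would check that $g_*$ is a morphism of pastures. The conditions $g_*(0)=0$ and $g_*(1)=(g_1(1),g_2(1))=(1,1)$ are immediate, multiplicativity follows componentwise from multiplicativity of $g_1$ and $g_2$ together with the componentwise multiplication on $P_1\times_P P_2$, and the involution condition $g_*(-x)=(-g_1(x),-g_2(x))=-g_*(x)$ is similar. The one step needing a short case analysis is preservation of additive relations. Suppose $x+y+z=0$ in $P'$. If $x,y,z$ are all nonzero, then $g_1(x)+g_1(y)+g_1(z)=0$ in $P_1$ and $g_2(x)+g_2(y)+g_2(z)=0$ in $P_2$ because $g_1,g_2$ are morphisms, and this is precisely the third defining rule for the nullset of $P_1\times_P P_2$, so $g_*(x)+g_*(y)+g_*(z)=0$. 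If one of the three is zero, by permutation invariance say $z=0$, then property (3) of the nullset of $P'$ gives $x=-y$, hence $g_*(x)=-g_*(y)$, and then $g_*(x)+g_*(y)+0=0$ by the second defining rule for the nullset of the fibered product (or by $0+0+0=0$ if also $x=y=0$). Having established that $g_*$ is a morphism, the relations $\pi_1\circ g_*=g_1$ and $\pi_2\circ g_*=g_2$ hold by construction, so the whole diagram commutes.

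For uniqueness, suppose $h:P'\rightarrow P_1\times_P P_2$ also satisfies $\pi_1\circ h=g_1$ and $\pi_2\circ h=g_2$. Then $h(0)=0=g_*(0)$, and for $x\in (P')^\times$ the element $h(x)$ is nonzero (again because morphisms preserve nonzero elements), so $h(x)=(a,b)$ with $a\in P_1^\times$ and $b\in P_2^\times$; applying $\pi_1$ and $\pi_2$ yields $a=g_1(x)$ and $b=g_2(x)$, so $h(x)=g_*(x)$. Hence $h=g_*$.

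I expect the only real obstacle, such as it is, to be the bookkeeping in the additive-relations step: one must remember that the relations of $P'$ involving $0$ are not covered by the "componentwise" rule defining the nullset of $P_1\times_P P_2$ and must be handled separately via property (3) of the nullset. Everything else is a routine transport of structure through the two given morphisms $g_1$ and $g_2$.
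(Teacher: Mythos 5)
Your proof is correct and follows essentially the same route as the paper: define $g_*(0)=0$ and $g_*(x)=(g_1(x),g_2(x))$, check well-definedness via $f_1\circ g_1=f_2\circ g_2$, verify the morphism axioms componentwise, and read off uniqueness from $\pi_1\circ g_*=g_1$ and $\pi_2\circ g_*=g_2$. The one place you are more careful than the paper is the preservation of additive relations involving $0$: the componentwise nullset rule of $P_1\times_P P_2$ only covers triples of nonzero elements, so your separate case $z=0$ (handled via property (3) of $N_{P'}$ and the rule $(a,b)+(-a,-b)+0=0$) closes a small gap that the paper's argument passes over silently.
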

\begin{proof}
Let us define $g_*(x)=(g_1(x),g_2(x))$ for $x\neq 0$ and $g_*(0)=0$. We first note that this is the only possible morphism that would make the diagram commute since $\pi_1\circ g_*(x)=g_1(x)$ and $\pi_2\circ g_*(x)=g_2(x)$. Thus, we have uniqueness of $g_*$. Furthermore, we have that $g_*$ is well-defined, since $f_1\circ g_1=f_2\circ g_2$, so $(g_1(x),g_2(x))\in P_1\times_P P_2$.

It simply remains to show that $g_*$ is a morphism. Since $g_1$ and $g_2$ are morphisms, we have that $g_*(1)=(g_1(1),g_2(1))=(1,1)$, and by definition $g_*(0)=0$, so we have that $g_*$ sends zero and the multiplicative unit to zero and the multiplicative unit, respectively. Now let us show that $g_*$ is a group homomorphism on the nonzero elements. So let $x,y\in P'$, then 
\[
g_*(xy)=(g_1(xy),g_2(xy))=(g_1(x)g_1(y),g_2(x)g_2(y))
\]
\[
=(g_1(x),g_2(x))(g_1(y),g_2(y))=g_*(x)g_*(y)
\]
So we have that $g_*$ is a group homomorphism on the nonzero elements.

Now to show that the involution property holds, we note that:
\[
g_*(-x)=(g_1(-x),g_2(-x))=(-g_1(x),-g_2(x))=-(g_1(x),g_2(x))=-g_*(x)
\]

It now remains to show that $g_*$ preserves the additive relations. Say $x+y+z=0$ in $P'$. We wish to show that $g_*(x)+g_*(y)+g_*(z)=0$, but
\[
g_*(x)+g_*(y)+g_*(z)=(g_1(x),g_2(x))+(g_1(y),g_2(y))+(g_1(z),g_2(z))
\]
Since $g_1$ is a morphism, we know that $g_1(x)+g_1(y)+g_1(z)=0$. Similarly, $g_2(x)+g_2(y)+g_2(z)=0$; thus, by how the additive relations are defined in $P_1\times_P P_2$ we have that $g_*(x)+g_*(y)+g_*(z)=0$. Thus, $g_*$ respects the additive relations, and $g_*$ is a morphism of pastures.
\end{proof}

We will summarize the following three lemmas as a theorem.

\begin{theorem}
The category of pastures has fibered products.
\end{theorem}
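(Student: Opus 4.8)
The plan is simply to assemble the three preceding lemmas into the statement of the theorem. Recall that, by definition, saying the category of pastures \emph{has fibered products} means that for every cospan of pastures $P_1 \xrightarrow{f_1} P \xleftarrow{f_2} P_2$ there exists a pasture, equipped with two morphisms to $P_1$ and $P_2$, satisfying the universal property stated at the beginning of this section. So I would fix an arbitrary such cospan and exhibit the object $P_1 \times_P P_2$ of the previous definition as the required fibered product.

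First I would invoke the first lemma, which guarantees that $P_1 \times_P P_2$, with its componentwise multiplication, absorbing $0$, involution $(a,b) \mapsto (-a,-b)$, and the prescribed nullset, is genuinely an object of the category of pastures. Next I would invoke the second lemma to see that the projections $\pi_1 \colon P_1 \times_P P_2 \to P_1$ and $\pi_2 \colon P_1 \times_P P_2 \to P_2$ are morphisms of pastures and that $f_1 \circ \pi_1 = f_2 \circ \pi_2$, so that the lower square of the diagram commutes. Finally I would invoke the third lemma, which shows that for any pasture $P'$ with morphisms $g_1 \colon P' \to P_1$ and $g_2 \colon P' \to P_2$ satisfying $f_1 \circ g_1 = f_2 \circ g_2$, the assignment $g_* (x) = (g_1(x), g_2(x))$ for $x \neq 0$ and $g_*(0) = 0$ is the unique morphism making the whole diagram commute. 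Together these three facts say exactly that $(P_1 \times_P P_2, \pi_1, \pi_2)$ is a fibered product of $P_1$ and $P_2$ over $P$; since the cospan was arbitrary, the category of pastures has all fibered products.

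There is essentially no residual obstacle here: the theorem is a packaging of work already done. If anything, the only place that required genuine checking — and which was already handled in the third lemma — is verifying that the candidate universal map $g_* = (g_1,g_2)$ actually takes values in $P_1 \times_P P_2$ (which uses $f_1 \circ g_1 = f_2 \circ g_2$) and that it respects the nullset of $P'$ (which uses that $g_1$ and $g_2$ are morphisms together with the componentwise definition of the nullset of $P_1 \times_P P_2$). Everything else in the argument is formal, and the theorem follows.
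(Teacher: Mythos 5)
Your proposal is correct and matches the paper exactly: the paper states this theorem as a summary of the three preceding lemmas (pasture structure, the projections being morphisms making the square commute, and the universal property), which is precisely the assembly you describe. No further checking is needed beyond what those lemmas already establish.
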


\section{fibered coproducts}

In this section we shall define the fibered coproduct of pastures, and we shall prove that this is the correct notion of the fibered coproduct. Recall that given a pastures $P_1$ and $P_2$ with maps $f_1:P\rightarrow P_1$ and $f_2:P\rightarrow P_2$, then the fibered coproduct is the pasture $P_1\otimes_P P_2$ along with maps $i_1:P_1\rightarrow P_1\otimes_P P_2$ and $i_2:P_2\rightarrow P_1\otimes_P P_2$ that satisfies the universal property that for any pasture $P'$ with morphisms $g_1:P_1\rightarrow P'$ and $g_2:P_2\rightarrow P'$, there is a unique morphism $g_*:P_1\otimes_P P_2\rightarrow P'$ such that the following diagram commutes:

\begin{center}

\begin{tikzcd}
P' & & \\
& P_1 \otimes_P P_2\arrow[ul,dotted,"g_*"]& P_1 \arrow[l,"i_1"]\arrow[llu,bend right, "g_1"] \\
& P_2\arrow[uul,bend left,"g_2"]\arrow[u, "i_2"]& P\arrow[l,"f_2"]\arrow[u,"f_1"]
\end{tikzcd}
    
\end{center}

\begin{defn}
The \textit{fibered coproduct of pastures $P_1$ and $P_2$ over the pasture $P$}, denoted $P_1\otimes_P P_2$, is defined as follows. As a set we have that $P_1\otimes_P P_2=P_1\times P_2/\sim$ where $P_1\times P_2$ is the Cartesian product, and the equivalence relation $\sim$ is defined as follows. $(a_1,b_1)\sim (a_2,b_2)$ if and only if one of the following hold: 
\begin{itemize}
    \item $a_1b_1=0$ or $a_2b_2=0$
    \item $(f_1(x)a,b)\sim (a,f_2(x)b)$ for $x\in P^\times$.
\end{itemize}
We can restate the second condition as $(a,b)\sim (c,d)$ if and only if $(c,d)=(f_1(x)^{-1}a,f_2(x)b)$ for some $x\in P^\times$. Denoting the equivalence class of $(a,b)$ as $[(a,b)]$. We have that $P_1\otimes_P P_2$ is a pasture where we have that $0=[(0,0)]$, $1=[(1,1)]$, $-[(a,b)]=[(-a,b)]=[(a,-b)]$. The multiplicative structure on $P_1\otimes_P P_2$ is given by $[(0,0)][(a,b)]=[(0,0)]$, and $[(a,b)][(c,d)]=[(ac,bd)]$ for $[(a,b)],[(c,d)]\in P_1\otimes_P P_2$. The additive relations of $P_1\otimes_P P_2$ are the given by:
\begin{itemize}
    \item $[(a,y)]+[(b,y)]+[(c,y)]=0$ for $y\in P_2$ and $a,b,c\in P_1$ with $a+b+c=0$ in $P_1$
    \item $[(x,a)]+[(x,b)]+[(x,c)]=0$ for $x\in P_1$ and $a,b,c\in P_2$ with $a+b+c=0$ in $P_2$.
\end{itemize}
We then define the maps $i_1$ and $i_2$ by $i_1: P_1\rightarrow P_1\otimes_P P_2$ is given by $0\mapsto [(0,0)]$ and $x\mapsto [(x,1)]$, and $i_2: P_2\rightarrow P_1\otimes_P P_2$ is given by $0\mapsto [(0,0)]$ and $y\mapsto [(1,y)]$. 
\end{defn}

Now we will begin by showing that $\sim$ is indeed an equivalence relation. Then we shall show that $P_1\otimes_P P_2$ is indeed a pasture. We will then show that $i_1$ and $i_2$ are homomorphisms of pastures that make the correct diagram commute, and conclude by showing that $P_1\otimes_P P_2$ satisfies the universal property of fibered coproducts.

\begin{lemma}
The equivalence relation defined in the definition of $P_1\otimes_P P_2$ is actually an equivalence relation.
\end{lemma}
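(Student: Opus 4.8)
The plan is to verify the three defining properties of an equivalence relation — reflexivity, symmetry, and transitivity — directly from the definition. It helps to first separate the two bullets: the first should be understood as declaring \emph{every} pair $(a,b)$ with $ab=0$ equivalent to one another (equivalently, to $(0,0)$), so that all such pairs form a single class, disjoint from $P_1^\times\times P_2^\times$; the second bullet, in the restated form ``$(a,b)\sim(c,d)$ iff $(c,d)=(f_1(x)^{-1}a,f_2(x)b)$ for some $x\in P^\times$,'' then governs the relation among pairs of units. This is the reading consistent with $0=[(0,0)]$ and $[(0,0)][(a,b)]=[(0,0)]$, and it is the one that makes $\sim$ transitive. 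Granting this, everything reduces to elementary manipulations using that $f_1,f_2$ are monoid homomorphisms and that $P^\times$, $P_1^\times$, $P_2^\times$ are abelian groups.

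For reflexivity, given $(a,b)$: if $ab=0$ apply the first bullet; if $ab\neq 0$ take $x=1\in P^\times$ and use $f_1(1)=f_2(1)=1$, so $(f_1(1)^{-1}a,f_2(1)b)=(a,b)$. For symmetry, assume $(a,b)\sim(c,d)$: the zero case is visibly symmetric, and in the unit case, from $(c,d)=(f_1(x)^{-1}a,f_2(x)b)$ one solves for $a$ and $b$ and uses $f_i(x)^{-1}=f_i(x^{-1})$ to obtain $(a,b)=(f_1(x^{-1})^{-1}c,f_2(x^{-1})d)$ with $x^{-1}\in P^\times$, so $(c,d)\sim(a,b)$.

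For transitivity, suppose $(a,b)\sim(c,d)$ and $(c,d)\sim(e,f)$. If any of the three pairs has a zero coordinate, then — because a unit pair can never be $\sim$-related to a pair with a zero coordinate — all three pairs have a zero coordinate and the conclusion holds by the first bullet. Otherwise all pairs are units, $(c,d)=(f_1(x)^{-1}a,f_2(x)b)$ and $(e,f)=(f_1(y)^{-1}c,f_2(y)d)$ for some $x,y\in P^\times$; substituting the first into the second and using $f_i(x)f_i(y)=f_i(xy)$ gives $(e,f)=(f_1(xy)^{-1}a,f_2(xy)b)$ with $xy\in P^\times$, hence $(a,b)\sim(e,f)$. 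I expect the only genuine subtlety to be exactly this interplay between the zero class and the unit-scaling relation: one must make sure the definition really does keep the two from being chained together, since otherwise transitivity would collapse the entire quotient. Once that point is pinned down, the remaining verifications are the routine group-theoretic identities displayed above.
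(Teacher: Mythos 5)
Your proposal is correct and follows essentially the same route as the paper: isolate the zero class, then verify reflexivity with $x=1$, symmetry with $x^{-1}$, and transitivity by composing to $xy$. The one place you go beyond the paper is in explicitly checking, in the transitivity step, that the zero class and the unit-scaling relation can never be chained together --- the paper simply restricts attention to nonzero pairs without comment --- and that extra care is a reasonable clarification of an ambiguity in the first bullet of the stated definition.
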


\begin{proof}
We first note that the first condition says that any pair that has some $0$ is equivalent to the pair $(0,0)$. That is $[(0,0)]=\{(a,b): \text{one of $a$ or $b$ is $0$}\}$.

Let us now simply focus on the case of $[(a,b)]$ where $a,b\neq 0$.

Reflexive: We note that $f_1(1)=f_2(1)=1$, so $(a,b)=(f_1(1)a,b)\sim (a,f_2(1)b)=(a,b)$ that is $\sim$ is reflexive.

Symmetric: Using the equivalent statement of the second condition, say we have that $(a,b)\sim (c,d)$ that is $(c,d)=(f_1(x)\inv a,f_2(x)b)$. We wish to show that $(f_1(x)\inv a,f_2(x)b)\sim (a,b)$, but $(a,b)=(f_1(x\inv)\inv f_1(x)\inv, f_2(x\inv)f_2(x)b)$; thus, we we have that $(c,d)\sim (a,b)$. Hence, it is symmetric.

Transitive: Say $(a,b)\sim (c,d)$ and $(c,d)\sim (e,f)$. Thus, $(c,d)=((f_1(x))\inv a,f_2(x)b)$ for some $x\in P^\times$ and $(e,f)=(f_1(y)\inv c,f_2(y)d)$ for some $y\in P^\times$. Thus, we get that:
\[
(e,f)=((f_1(y))\inv c,f_2(y)d)=((f_1(y))\inv (f_1(x))\inv a,f_2(y)f_2(x)b)=((f_1(xy))\inv a,f_2(xy)b)
\]
That is $(a,b)\sim (e,f)$. Thus, $\sim$ is an equivalence relation.
\end{proof}

\begin{lemma}
The pasture $P_1\otimes_P P_2$ as defined is indeed a pasture.
\end{lemma}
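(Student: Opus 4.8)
The plan is to verify, in order: (i) that multiplication descends to a well-defined operation on equivalence classes; (ii) that $(P_1\otimes_P P_2)^\times$ is an abelian group with unit $[(1,1)]$ and that $[(0,0)]$ is absorbing; (iii) that the involution $[(a,b)]\mapsto[(-a,b)]$ is well defined, fixes $0$, and squares to the identity; and (iv) that the nullset of $P_1\otimes_P P_2$ satisfies the three pasture axioms. Throughout I would use the concrete description of $\sim$ extracted in the previous lemma: every pair with a zero coordinate lies in the single class $[(0,0)]$, while for $a\in P_1^\times$, $b\in P_2^\times$ one has $(a,b)\sim(c,d)$ iff $(c,d)=(f_1(x)^{-1}a,f_2(x)b)$ for some $x\in P^\times$.

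For (i), replacing $(a,b)$ by $(f_1(x)^{-1}a,f_2(x)b)$ and $(c,d)$ by $(f_1(y)^{-1}c,f_2(y)d)$ changes $(ac,bd)$ to $(f_1(xy)^{-1}ac,f_2(xy)bd)$, which is $\sim$-equivalent to $(ac,bd)$; and if either factor has a zero coordinate so does the product. For (ii), the nonzero classes are exactly those with a representative $(a,b)$ with both entries nonzero, and then closure, commutativity, the unit $[(1,1)]$, and the inverse $[(a^{-1},b^{-1})]$ all follow at once from the corresponding facts in $P_1^\times$ and $P_2^\times$, while $[(0,0)]$ is absorbing by definition. For (iii), the only nonobvious point is $[(-a,b)]=[(a,-b)]$: since $-1\in P^\times$ and $f_i$ is a morphism, $f_1(-1)=f_2(-1)=-1$ and $(-1)^{-1}=-1$, so $(f_1(-1)^{-1}(-a),f_2(-1)b)=(a,-b)$; that the involution fixes $0$ and is an involution is then clear, and $-[(a,b)]=[(-1,1)]\cdot[(a,b)]$, so it interacts with multiplication as required.

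The substantive part is (iv). I would first describe the nullset intrinsically as the set of triples of classes admitting \emph{either} a common-second-coordinate representative $(a,y),(b,y),(c,y)$ with $a+b+c=0$ in $P_1$, \emph{or} a common-first-coordinate representative $(x,a),(x,b),(x,c)$ with $a+b+c=0$ in $P_2$; this is manifestly a well-defined subset of $(P_1\otimes_P P_2)^3$. Axiom (1) (permutation-invariance) is inherited from $N_{P_1}$ and $N_{P_2}$, since permuting the triple permutes the chosen representatives without disturbing the common coordinate. For axiom (2), multiplying a relation of the first kind by $[(s,t)]$ with $s,t$ nonzero yields $(sa,ty),(sb,ty),(sc,ty)$, again a common-second-coordinate family with $sa+sb+sc=0$ in $P_1$; if $[(s,t)]=0$ all three products are $0$ and $0+0+0=0$; the second kind is symmetric. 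For axiom (3): if $[(b_1,b_2)]\neq 0$ then $(-b_1,b_2),(b_1,b_2),(0,b_2)$ is a common-second-coordinate family with $-b_1+b_1+0=0$ in $P_1$, giving $\alpha+(-\alpha)+0=0$ (and if $\alpha=0$ the relation $0+0+0=0$ gives it too); conversely, if $\alpha+\beta+0=0$ via a common-second-coordinate representative $(a,y),(b,y),(c,y)$ with $[(c,y)]=0$, then either $y=0$ and everything is $0$, or $c=0$ and $a+b+0=0$ in $P_1$ forces $a=-b$ by axiom (3) in $P_1$, whence $\alpha=[(a,y)]=[(-b,y)]=-\beta$; the common-first-coordinate case is identical with $P_1$ and $P_2$ interchanged.

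The main obstacle I anticipate is the bookkeeping in axioms (2) and (3): one must track the several ways a triple can lie in the nullset — via either bullet, and with the ``zero slot'' realized through an arbitrary coordinate because $[(0,0)]=[(0,y)]=[(x,0)]$ — and check that in every presentation the desired conclusion still follows. I do not expect any genuinely hard step once the intrinsic description of the nullset is pinned down; the rest is routine verification transported coordinatewise from $P_1$ and $P_2$.
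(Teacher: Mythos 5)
Your proposal is correct and follows essentially the same route as the paper: well-definedness of multiplication via the explicit form of $\sim$, the group axioms componentwise, the identity $[(-a,b)]=[(a,-b)]$ via $f_i(-1)=-1$, and the nullset axioms checked separately for the two types of generating relations. If anything, your treatment of axiom (3) is more careful than the paper's, since you track both directions of the biconditional and the fact that the zero class can be represented as $(0,y)$ or $(x,0)$.
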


\begin{proof}
The first step that we shall take is to show that multiplication is well-defined in $P_1\otimes_P P_2$. 

Since for $(a,b)\sim (a',b')$ and $(c,d)\sim (c',d')$, we have that:
\[
(a,b)(c,d)=(ac,bd)
\]
and 
\[
(a',b')(c',d')=(a'c',b'd')
\]
However, $a'=(f_1(x))\inv a$ and $b'=f_2(x)b$ for some $x\in P^\times$ and $c'=(f_1(y))\inv c$ and $d'=f_2(y)d$ for some $y\in P^\times$; thus,
\[
(a'c',b'd')=((f_1(x))\inv a(f_1(y))\inv c, f_2(x)bf_2(y)d)=((f_1(xy))\inv ac,f_2(xy)bd)
\]
Thus, we have that $(a'c',b'd')\sim (ac,bd)$. Thus, the multiplication is well-defined.

Now by definition we have that $[(0,0)][(a,b)]=[(0,0)]$, so zero is an absorbing element. Let us now show that $(P_1\otimes_P P_2)^\times$ is an abelian group. We note that by showing that multiplication is well-defined, we have that $(P_1\otimes_P P_2)^\times$ is closed under multiplication. 

We now claim that the multiplicative identity is $[(1,1)]$. This is true since we have that $[(1,1)][(a,b)]=[(1(a),1(b))]=[(a,b)]$. We have inverses as $[(a,b)]\inv=[(a\inv,b\inv)]$ since $[(a,b)][(a,b)]\inv=[(a,b)][(a\inv,b\inv)]=[(1,1)]$. This concludes showing that $(P_1\otimes_P P_2)^\times$ is an abelian group.

Let us now show that the involution is indeed an involution. This is fairly obvious from the definition that $-[(a,b)]=[(-a,b)]=[(a,-b)]$. However, we should show that $[(-a,b)]\sim [(a,-b)]$. This follows from the fact that in a pasture we have that $-1\cdot a=-a$. This is easily seen by seeing that $1+-1+0=0$, so multiplying by $a$ gives $a+(-1)a+0=0$, and this implies that $-a=-1\cdot a$. Thus, we observe that $[(-a,b)]\sim[(a,-b)]$ since $(a,-b)=(f_1(-1)\inv(-a),f_2(-1)b)$.

Now we will show that the additive relations satisfy the properties of the null set. Without loss of generality, let us just focus on the additive relations of the form $[(a,y)]+[(b,y)]+[(c,y)]=0$ for $y\in P_2$ and $a,b,c\in P_1$ with $a+b+c=0$. We note that if we permute these element since any permutation of $a+b+c=0$ also is an additive relation, we have that any permutation of $[(a,y)]+[(b,y)]+[(c,y)]=0$ is also an additive relation. Thus, we have that property (1) of the null set is satisfied. 

Say that $[(a,y)]+[(b,y)]+[(c,y)]=0$ and $[(d,z)]\in P_1\otimes_P P_2$. Then we have that $[(d,z)][(a,y)]+[(d,z)][(b,y)]+[(d,z)][(c,y)]=[(da,zy)]+[(db,zy)]+[(dc,zy)]=0$ since we have that $a+b+c=0$ in $P_1$, so $da+db+dc=0$ in $P_1$ and we have that $zy\in P_2$. Hence we have that property (2) of the null set is satisfied. 

Finally, say that we have that $[(a,y)]+[(b,y)]+[(0,0)]=0$, then this is an additive relation if and only if $a+b+0=0$ in $P_1$. However, this occurs if and only if $a=-b$. We note that $-[(b,y)]=[(-b,y)]=[(a,y)]$. Thus, property (3) of the null set is satisfied. 

Thus, we have that $P_1\otimes_P P_2$ satisfies all the axioms of a pasture.
\end{proof}

\begin{lemma}
The functions $i_1$ and $i_2$ are morphisms of pastures such that $i_1\circ f_1=i_2\circ f_2$.
\end{lemma}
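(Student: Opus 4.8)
The plan is to verify the morphism axioms for $i_1$ directly from the definitions, to observe that the argument for $i_2$ is completely symmetric, and then to check the commutativity $i_1\circ f_1=i_2\circ f_2$ by evaluating both composites on an arbitrary element of $P$ and invoking the second clause of the equivalence relation $\sim$.

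First I would check that $i_1$ is a morphism of pastures. By definition $i_1(0)=[(0,0)]=0$ and $i_1(1)=[(1,1)]=1$, so $i_1$ sends zero and the multiplicative unit to zero and the multiplicative unit. For the involution, $i_1(-a)=[(-a,1)]=-[(a,1)]=-i_1(a)$. For multiplicativity, $i_1(ab)=[(ab,1)]=[(a,1)][(b,1)]=i_1(a)i_1(b)$ by the multiplication rule on $P_1\otimes_P P_2$ when $a,b\in P_1^\times$, and $i_1(0\cdot a)=i_1(0)=[(0,0)]=[(0,0)][(a,1)]=i_1(0)i_1(a)$ otherwise. Finally, if $a+b+c=0$ in $P_1$, then taking $y=1\in P_2$ in the first family of additive relations of $P_1\otimes_P P_2$ gives $i_1(a)+i_1(b)+i_1(c)=[(a,1)]+[(b,1)]+[(c,1)]=0$. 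Hence $i_1$ is a morphism, and the identical argument, using the second family of additive relations together with $x=1\in P_1$, shows that $i_2$ is a morphism.

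It then remains to check commutativity. Let $x\in P$. If $x=0$, then $i_1(f_1(0))=i_1(0)=0=i_2(0)=i_2(f_2(0))$. If $x\in P^\times$, then since $f_1$ and $f_2$ are morphisms we have $f_1(x)\in P_1^\times$ and $f_2(x)\in P_2^\times$, so $i_1(f_1(x))=[(f_1(x),1)]$ and $i_2(f_2(x))=[(1,f_2(x))]$. Applying the second clause of $\sim$ with $a=b=1$ gives $(f_1(x)\cdot 1,1)\sim(1,f_2(x)\cdot 1)$, that is, $[(f_1(x),1)]=[(1,f_2(x))]$. Therefore $i_1\circ f_1=i_2\circ f_2$, as required.

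I do not expect a genuine obstacle here: every step is a one-line consequence of the definitions already established. The only points requiring a moment of care are that the equality $i_1\circ f_1=i_2\circ f_2$ is precisely what the second defining clause of $\sim$ was designed to force, and that the verification that $i_1$ respects addition relies on the specific shape of the additive relations of $P_1\otimes_P P_2$, namely that one is permitted to hold the second coordinate fixed at $1$ while varying the first.
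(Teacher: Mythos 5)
Your proof is correct and follows essentially the same route as the paper: verify the morphism axioms for $i_1$ coordinate by coordinate, note the symmetry for $i_2$, and deduce $i_1\circ f_1=i_2\circ f_2$ from the second defining clause of $\sim$ applied with $a=b=1$. Your treatment is in fact slightly more careful than the paper's, since you handle the zero element explicitly in both the multiplicativity check and the commutativity check.
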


\begin{proof}
Without loss of generality, let us show that $i_1$ is a morphism of pastures. Let us first show that $i_1$ is a homomorphism of the abelian groups $P_1^\times\rightarrow (P_1\otimes_P P_2)^\times$. Say that $x,y\in P_1^\times$, then we note that
\[
i_1(xy)=[(xy,1)]=[(x,1)][(y,1)]=i_1(x)i_1(y)
\]
so $i_1$ is a homomorphism of groups. Furthermore, we note that $i_1(0)=[(0,1)]=[(0,0)]$, and $i_1(1)=[(1,1)]$. Thus, $i_1$ sends the zero and multiplicative identity to zero and the multiplicative identity respectively. 

Now let us show that $i_1$ preserves the involution. This is clear since we have that $i_1(-x)=[(-x,1)]=-[(x,1)]=-i_1(x)$. 

Now it simply remains to show that the additive relations are preserved that is if $a+b+c=0$ in $P_1$, then $i_1(a)+i_1(b)+i_1(c)=0$ in $P_1\otimes_P P_2$. This is fairly clear since this just says that $[(a,1)]+[(b,1)]+[(c,1)]=0$ in $P_1\otimes_P P_2$ whenever $a+b+c=0$ in $P_1$, but this is exactly how the additive relations are defined. Hence we have that $i_1$ is a morphism.

Now let us show that $i_1\circ f_1=i_2\circ f_2$. So consider $x\in P$. Then we have that $i_1(f_1(x))=[(f_1(x),1)]$ and $i_2(f_2(x))=[(1,f_2(x))]$. However, we have that $[(f_1(x),1)]\sim [(1,f_2(x))]$, so we have that $i_1\circ f_1=i_2\circ f_2$.
\end{proof}

\begin{lemma}
If $P'$ is a pasture and $g_1:P_1\rightarrow P'$ and $g_2:P_2\rightarrow P'$ are morphisms such that $g_1\circ f_1=g_2\circ f_2$, then there is a unique morphism $g_*:P_1\otimes_P P_2\rightarrow P'$. That is $P_1\otimes_P P_2$ satisfies the universal property of fibered coproducts.
\end{lemma}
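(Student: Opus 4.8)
The plan is to mimic the structure of the proof for the fibered product: define the candidate morphism explicitly, check it is well-defined on equivalence classes, verify uniqueness, and then verify it is a morphism of pastures axiom by axiom. Concretely, I would set $g_*([(a,b)]) = g_1(a)g_2(b)$, with $g_*([(0,0)]) = 0$. The first thing to check is that this is well-defined: if $(a,b)\sim(c,d)$ with $a,b,c,d$ nonzero, then $(c,d) = (f_1(x)^{-1}a, f_2(x)b)$ for some $x\in P^\times$, so $g_1(c)g_2(d) = g_1(f_1(x)^{-1}a)g_2(f_2(x)b) = g_1(f_1(x))^{-1}g_1(a)g_2(f_2(x))g_2(b)$, and since $g_1\circ f_1 = g_2\circ f_2$ the factors $g_1(f_1(x))^{-1}$ and $g_2(f_2(x))$ cancel, leaving $g_1(a)g_2(b)$. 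The degenerate class $[(0,0)]$ maps consistently to $0$ since any representative has a zero coordinate and $g_1(0) = g_2(0) = 0$.

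Next I would address uniqueness. Any morphism $h: P_1\otimes_P P_2 \to P'$ making the diagram commute must satisfy $h\circ i_1 = g_1$ and $h\circ i_2 = g_2$, hence $h([(a,1)]) = g_1(a)$ and $h([(1,b)]) = g_2(b)$; since $[(a,b)] = [(a,1)]\cdot[(1,b)]$ and $h$ is multiplicative, $h([(a,b)]) = g_1(a)g_2(b) = g_*([(a,b)])$. So $g_*$ is the only candidate, and it does satisfy $g_*\circ i_1 = g_1$ and $g_*\circ i_2 = g_2$ by direct computation ($g_*(i_1(a)) = g_*([(a,1)]) = g_1(a)g_2(1) = g_1(a)$, similarly for $i_2$).

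Then I would check $g_*$ is a morphism of pastures. It sends $[(0,0)]\mapsto 0$ by definition and $[(1,1)]\mapsto g_1(1)g_2(1) = 1$. Multiplicativity: $g_*([(a,b)][(c,d)]) = g_*([(ac,bd)]) = g_1(ac)g_2(bd) = g_1(a)g_1(c)g_2(b)g_2(d) = g_*([(a,b)])g_*([(c,d)])$ using commutativity in $P'$. The involution: $g_*(-[(a,b)]) = g_*([(-a,b)]) = g_1(-a)g_2(b) = -g_1(a)g_2(b) = -g_*([(a,b)])$. For the additive relations, it suffices to check the two generating families: if $a+b+c = 0$ in $P_1$ then $[(a,y)]+[(b,y)]+[(c,y)] = 0$, and applying $g_*$ gives $g_1(a)g_2(y) + g_1(b)g_2(y) + g_1(c)g_2(y)$, which is $0$ in $P'$ because $g_1(a)+g_1(b)+g_1(c) = 0$ (as $g_1$ is a morphism) and the nullset of $P'$ is closed under multiplication by $g_2(y)$; the family indexed by relations in $P_2$ is symmetric.

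The main obstacle is the well-definedness check, and more subtly the claim that verifying $g_*$ on the two generating families of additive relations is enough. For well-definedness one must be careful that the two bullets defining $\sim$ interact correctly — in particular that the relation as written is already transitive and symmetric (established in the earlier lemma) so that it suffices to check invariance under a single elementary move $(f_1(x)a,b)\leftrightarrow(a,f_2(x)b)$. For the additive relations, one should note that by definition the nullset $N_{P_1\otimes_P P_2}$ is \emph{generated} (as a permutation- and multiplication-closed set, together with the trivial relations forced by axiom (3)) by exactly those two families, so a morphism preserving each generator preserves all of $N_{P_1\otimes_P P_2}$; I would state this explicitly to justify that the finite check above is complete. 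Everything else is a routine diagram chase.
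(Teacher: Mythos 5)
Your proposal is correct and follows essentially the same route as the paper: the same formula $g_*([(a,b)])=g_1(a)g_2(b)$, the same well-definedness computation using $g_1\circ f_1=g_2\circ f_2$, the same uniqueness argument via $[(a,b)]=[(a,1)][(1,b)]$, and the same verification of the pasture-morphism axioms on the two generating families of additive relations. Your added remark that one must justify why checking only those two families suffices is a reasonable clarification (the paper's definition takes them as the entire nullset, so nothing further is needed), but it does not change the argument.
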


\begin{proof}
Let us define $g_*([(a,b)])=g_1(a)g_2(b)$. Let us show that $g_*$ is well-defined. Let $(a,b)\sim (a',b')$. So $g_*((a,b))=g_1(a)g_2(b)$, and $g_*((a',b'))=g_1(a')g_2(b')$. Since $(a,b)\sim (a',b')$, we have that $a'=(f_1(x))\inv a$ and $b'=f_2(x)b$ for some $x\in P^\times$. Thus, 
\[
g_1(a')g_2(b')=g_1((f_1(x))\inv a)g_2(f_2(x)b)=g_1((f_1(x))\inv)g_1(a)g_2(f_2(x))g_2(b)
\]
\[
= g_1(a)g_2(b)(g_1(f_1(x)))\inv g_2(f_2(x))
\]
However, since we have $g_1\circ f_1=g_2\circ f_2$, we must have $(g_1(f_1(x)))\inv g_2(f_2(x))=1$. Hence $g_1(a)g_2(b)=g_1(a')g_2(b')$ that is $g_*$ is a well-defined function.

Now let us show that $g_*$ is a morphism of pastures. First let us show that $g_*$ is a group homomorphism of the nonzero elements. This is clear since
\[
g_*([(a,b)][(c,d)]=g_*([(ac,bd)])=g_1(ac)g_2(bd)=g_1(a)g_2(b)g_1(c)g_2(d)
\]
\[
=g_*([(a,b)])g_*([(c,d)])
\]
Thus, we have that $g_*$ is a group homomorphism of the nonzero elements. Now we have that $g_*$ preserves the involution since 
\[
g_*(-[(a,b)])=g_*([(-a,b)])=g_1(-a)g_2(b)=-g_1(a)g_2(b)=-g([(a,b)])
\]
Now it remains to show that $g_*$ preserves the additive relations. Without loss of generality, say that we have that $[(a,y)]+[(b,y)]+[(c,y)]=0$, then:
\[
g_*([(a,y)])+g_*([(b,y)])+g_*([(c,y)])
\]
\[
=g_1(a)g_2(y)+g_1(b)g_2(y)+g_1(c)g_2(y)=0
\]
Since $a+b+c=0$ and $g_1$ is a morphism $g_1(a)+g_1(b)+g_1(c)=0$; thus, the multiplication by $g_2(y)$ still preserves the additive relation. Thus, we have that $g_*$ preserves the additive relations, and $g_*$ is a morphism.

Now let us show that $g_*$ as it has been defined will make the entire diagram commute. Let $x\in P_1$, we wish to show that $g_*\circ i_1(x)=g_1(x)$. This is the case since $g_*\circ i_1(x)=g_*([(x,1)])=g_1(x)g_2(1)=g_1(x)$. Similarly, we have that $g_*\circ i_2=g_2$. Thus, the diagram will commute.

It now only remains to show that the morphism $g_*$ is the unique morphism from $P_1\otimes_P P_2\rightarrow P'$ that makes the diagram commute. We note that for the diagram to commute, we require that $g_*([(a,1)])=g_1(a)$ and $g_*([(1,b)])=g_2(b)$. Observe:
\[
g_*([(a,b)])=g_*([(a,1)][(1,b)])=g_*([(a,1)])g_*([(1,b)])=g_1(a)g_2(b)
\]
That is $g_*$ is the only such morphism that will make the diagram commute. Showing that we have that $g_*$ is unique.
\end{proof}

We summarize our results as the following theorem

\begin{theorem}
The category of pastures has fibered coproducts.
\end{theorem}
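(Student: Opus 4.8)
The plan is to unwind what the statement asks for: a category has fibered coproducts precisely when every diagram of the shape $P_1 \xleftarrow{f_1} P \xrightarrow{f_2} P_2$ in it has a colimit. So it suffices to fix an arbitrary such cospan of pastures and exhibit one pasture, together with maps into it from $P_1$ and from $P_2$, satisfying the universal property of the fibered coproduct recorded in the diagram above. I would take the explicit object $P_1 \otimes_P P_2$ introduced in the Definition --- the quotient of the Cartesian product $P_1 \times P_2$ by the relation $\sim$, equipped with componentwise multiplication, the involution $[(a,b)] \mapsto [(-a,b)]$, and the two prescribed families of additive relations --- together with the inclusions $i_1, i_2$, and verify the three things a fibered coproduct requires.

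That verification is exactly what the four preceding lemmas accomplish, and I would invoke them in this order. First, one checks that $\sim$ is genuinely an equivalence relation, so that the underlying set of $P_1 \otimes_P P_2$ makes sense; reflexivity, symmetry, and transitivity each reduce to the group law in $P^\times$ together with the fact that $f_1$ and $f_2$ are monoid homomorphisms. Second, one checks that $P_1 \otimes_P P_2$ is a pasture: that multiplication descends to the quotient, that $[(1,1)]$ is a unit and $[(a^{-1},b^{-1})]$ inverts $[(a,b)]$, that the involution is well defined (using $-a = (-1)\cdot a$, valid in any pasture), and that the two families of triples form a legitimate nullset, i.e. are closed under permutation and multiplication and satisfy $a+b+0=0 \iff a=-b$. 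Third, one checks that $i_1$ and $i_2$ are morphisms of pastures and that $i_1 \circ f_1 = i_2 \circ f_2$; the latter holds because $[(f_1(x),1)] \sim [(1,f_2(x))]$ by the very definition of $\sim$. Fourth, given any competing cocone $(P', g_1 \colon P_1 \to P', g_2 \colon P_2 \to P')$ with $g_1 \circ f_1 = g_2 \circ f_2$, one sets $g_*([(a,b)]) = g_1(a)g_2(b)$ and checks that it is well defined, a morphism of pastures, compatible with $i_1$ and $i_2$, and the unique such map. Assembling these four lemmas gives the theorem.

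The one genuinely delicate point --- and the place where the hypotheses actually get used --- is well-definedness on the quotient. For multiplication to descend, $\sim$ must be a congruence, which is what forces the twist in the first coordinate to be by $f_1(x)^{-1}$ (so that the twists in a product multiply correctly); and for $g_*$ to descend, one needs exactly the identity $g_1(f_1(x)) = g_2(f_2(x))$, i.e. the commutativity of the given square, so that the spurious factor $g_1(f_1(x))^{-1} g_2(f_2(x))$ collapses to $1$. Once those checks are in place, the pasture axioms and the morphism axioms are routine bookkeeping, and uniqueness of $g_*$ is immediate: every class satisfies $[(a,b)] = [(a,1)]\cdot[(1,b)] = i_1(a)\, i_2(b)$, so any morphism compatible with $i_1$ and $i_2$ is forced to send $[(a,b)]$ to $g_1(a)g_2(b)$. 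I therefore expect no real obstacle beyond organizing these well-definedness arguments carefully.
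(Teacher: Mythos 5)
Your proposal is correct and follows essentially the same route as the paper: construct $P_1\otimes_P P_2$ as the quotient of $P_1\times P_2$ by $\sim$, verify in turn that $\sim$ is an equivalence relation, that the quotient is a pasture, that $i_1,i_2$ are morphisms with $i_1\circ f_1=i_2\circ f_2$, and that $g_*([(a,b)])=g_1(a)g_2(b)$ is the unique mediating morphism. You also correctly isolate the two well-definedness checks (the congruence property of $\sim$ and the collapse of $g_1(f_1(x))^{-1}g_2(f_2(x))$ to $1$) and the factorization $[(a,b)]=i_1(a)i_2(b)$ driving uniqueness, which are exactly the points the paper's lemmas hinge on.
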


\section{equalizers}
In this section, we shall show that equalizer of two functions $f$ and $g$ exists in the category of pastures. We shall then show that the notion of equalizer as we define it is indeed the correct notion. Recall that given pastures $P_1, P_2$ and maps $f,g:P_1\rightarrow P_2$. The equalizer of $f$ and $g$ is a pasture $Q$ along with a map $q:Q\rightarrow P_1$ such that for any other pasture $Q'$ with a map $q':Q'\rightarrow P_1$ there is a unique map $u:Q'\rightarrow Q$ such that the following diagram commutes:

\begin{center}
\begin{tikzcd}
Q\arrow[r,"q"] & P_1 \arrow[r,shift left=2.5,"f"]\arrow[r,shift right,"g"] & P_2\\
Q'\arrow[u,dotted,"u"]\arrow[ur,"q'"]
\end{tikzcd}
\end{center}

\begin{defn}
Given pastures $P_1$ and $P_2$ and maps $f,g:P_1\rightarrow P_2$ the equalizer of $f$ and $g$ will be given by the subpasture $Q=\{x\in P_1: f(x)=g(x)\}$. Thus, we have that the multiplication is just given by the multiplication in $P_1$. The additive relations are given by $a+b+c=0$ in $Q$ if and only if $a+b+c=0$ in $P_1$ for $a,b,c\in Q$. The map $q:Q\rightarrow P_1$ is just given by inclusion. 
\end{defn}

\begin{lemma}
$Q$ is a pasture, namely it is a subpasture of $P_1$, and the inclusion $q:Q\rightarrow P_1$ is a morphism of pastures such that $f\circ q=g\circ q$.  
\end{lemma}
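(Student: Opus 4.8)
The plan is to verify directly that $Q=\{x\in P_1: f(x)=g(x)\}$, equipped with the multiplication, involution, and nullset inherited by restriction from $P_1$, satisfies all the pasture axioms; once this is done, the assertions about $q$ are essentially immediate. The one place where more than bookkeeping is needed is in checking that $Q$ is closed under the group operations and the involution, and this is exactly where the hypothesis that $f$ and $g$ are \emph{morphisms} (rather than arbitrary maps) gets used.

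First I would show that $Q^\times=Q\cap P_1^\times$ is an abelian subgroup of $P_1^\times$. Since $f$ and $g$ are morphisms, $f(1)=1=g(1)$, so $1\in Q$; if $a,b\in Q^\times$ then $f(ab)=f(a)f(b)=g(a)g(b)=g(ab)$, giving closure; and $f(a\inv)=f(a)\inv=g(a)\inv=g(a\inv)$ gives inverses. Commutativity is inherited from $P_1^\times$. Likewise $f(0)=0=g(0)$ gives $0\in Q$, and $0$ remains absorbing, so $Q$ is a multiplicative monoid with zero and $Q^\times=Q-\{0\}$ is an abelian group. For the involution, $f(-a)=-f(a)=-g(a)=g(-a)$ shows $-a\in Q$ whenever $a\in Q$, and the map $x\mapsto -x$ restricted to $Q$ is still an involution fixing $0$.

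Next I would check the three nullset axioms for $N_Q:=N_{P_1}\cap Q^3$. Permutation invariance is inherited from $N_{P_1}$. For multiplicative invariance, if $a+b+c=0$ with $a,b,c\in Q$ and $d\in Q$, then $da,db,dc\in Q$ by the multiplicative closure just established, and $da+db+dc=0$ holds in $P_1$ by the corresponding axiom there, so this relation lies in $N_Q$. Finally $a+b+0=0\iff a=-b$ for $a,b\in Q$ follows from the same equivalence in $P_1$ together with $-b\in Q$. This establishes that $Q$ is a pasture, and by construction a subpasture of $P_1$.

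It then remains to observe that the inclusion $q:Q\to P_1$ sends $0\mapsto 0$ and $1\mapsto 1$, is multiplicative, commutes with the involution, and carries relations in $N_Q$ to relations in $N_{P_1}$ — all immediate from the fact that the structure on $Q$ is the restriction of that on $P_1$ — so $q$ is a morphism of pastures. Lastly, for every $x\in Q$ we have $f(q(x))=f(x)=g(x)=g(q(x))$ by the defining condition of $Q$, hence $f\circ q=g\circ q$. No step presents a genuine obstacle; the only mildly non-formal point is the closure of $Q$ under multiplication, inverses, and the involution, which is precisely the content carried by $f$ and $g$ being morphisms.
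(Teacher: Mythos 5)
Your proposal is correct and follows essentially the same route as the paper's own proof: closure of $Q$ under multiplication, inverses, and the involution via the morphism properties of $f$ and $g$, the nullset axioms inherited by restriction from $P_1$, and the immediate verification that the inclusion is a morphism satisfying $f\circ q=g\circ q$. No substantive differences to report.
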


\begin{proof}
We begin by noting that $0,1\in Q$ since we have that $f,g$ are morphisms we must have that $f(0)=g(0)=0$ and $f(1)=g(1)=1$. To show that the nonzero elements of $Q$ form an abelian group, let us show that if $x,y\in Q$, then $xy\in Q$. This simply follows from $f,g$ being morphisms, so we have that $f(xy)=f(x)f(y)=g(x)g(y)=g(xy)$. Now let us show that we have inverses in $Q$, so if $x\in Q$, let us show that $x\inv\in Q$. Again this follows from using the fact that $f(x)=g(x)$, and $f$ and $g$ are morphisms we have that $f(x\inv)=f(x)\inv=g(x)\inv=g(x\inv)$. Thus, we have that $x\inv\in Q$.  This assures that $Q^\times$ is an abelian group. 

Now we have that $Q$ is closed under taking the involution. That is for $x\in Q$ we have that $-x\in Q$. This is clear since $f(-x)=-f(x)=-g(x)=g(-x)$. 

Now the last thing that remains to show is the additive relations satisfy the properties of a nullset. We note that since $a+b+c=0$ in $Q$ is true if and only if $a+b+c=0$ in $P_1$. Thus, if we permute $a,b,c$ in $P_1$, then it will still add to $0$ in $P_1$, so it will add to $0$ in $Q$. Thus, $N_Q$ will satisfy property (1) of the nullset. Now if $a+b+c=0$ in $Q$ and $d\in Q$, then we know that $a+b+c=0$ in $P_1$, so as $P_1$ is a pasture and $d\in P_1$, we have that $da+db+dc=0$ in $P_1$, but this gives us that $da+db+dc=0$ in $Q$. Thus, we have that $N_Q$ satisfies property (2) of being a nullset. Finally, we know that if $a+b+0=0$ in $Q$, then $a+b+0=0$ in $P_1$, but as $P_1$ is a pasture this implies that $a=-b$. Thus, the $N_Q$ satisfies property (3) of being a nullset. Thus, we have that $Q$ is indeed a pasture, namely it is a subpasture of $P_1$.

Since $Q$ is a subpasture of $P_1$, we have that the inclusion $q: Q\rightarrow P_1$ is a morphism. Furthermore, we have that for $x\in Q$ that $f(q(x))=f(x)=g(x)=g(q(x))$, so we have that $f\circ q=g\circ q$.
\end{proof}

\begin{lemma}
If $Q'$ is a pasture and $q':Q'\rightarrow P_1$ is a morphism such that $f\circ q'=g\circ q'$, then there is a unique map $u:Q'\rightarrow Q$ such that $q\circ u=q'$.
\end{lemma}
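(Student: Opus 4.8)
The plan is to observe that, since $q$ is simply the inclusion of the subpasture $Q=\{x\in P_1: f(x)=g(x)\}$ into $P_1$, the equation $q\circ u = q'$ forces $u$ to agree with $q'$ as a set map. So the only candidate is $u(x)=q'(x)$, and the entire content is to check (i) this lands in $Q$, and (ii) it is a morphism of pastures; uniqueness is then immediate.

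First I would verify that $q'(Q')\subseteq Q$. Take any $x\in Q'$. By hypothesis $f\circ q' = g\circ q'$, so $f(q'(x))=g(q'(x))$, which is exactly the condition for $q'(x)$ to lie in $Q$. Hence $u\colon Q'\to Q$, $u(x)=q'(x)$, is a well-defined function, and by construction $q\circ u = q'$ since $q$ is the inclusion.

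Next I would check that $u$ is a morphism of pastures. Since $Q$ is a subpasture of $P_1$ (proven in the previous lemma), its multiplication, involution, and nullset are the restrictions of those of $P_1$. Thus $u(0)=q'(0)=0$, $u(1)=q'(1)=1$, $u(ab)=q'(ab)=q'(a)q'(b)=u(a)u(b)$, and $u(-a)=q'(-a)=-q'(a)=-u(a)$, all because $q'$ is a morphism. For the additive relations: if $a+b+c=0$ in $Q'$, then $q'(a)+q'(b)+q'(c)=0$ in $P_1$ because $q'$ is a morphism; since all three elements lie in $Q$ and $N_Q$ is by definition the restriction of $N_{P_1}$ to triples from $Q$, this is the same as $u(a)+u(b)+u(c)=0$ in $Q$. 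So $u$ is a morphism.

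Finally, uniqueness: if $\tilde u\colon Q'\to Q$ also satisfies $q\circ \tilde u = q'$, then for every $x\in Q'$ we have $\tilde u(x) = q(\tilde u(x)) = q'(x) = u(x)$ (viewing elements of $Q$ inside $P_1$ via the injection $q$), so $\tilde u = u$. I do not expect any genuine obstacle here; the only point requiring care is making sure the nullset of $Q$ really is inherited from $P_1$, so that "respecting additive relations in $P_1$" immediately gives "respecting additive relations in $Q$" — but that is exactly the content of the definition of $Q$ together with the preceding lemma. Combining this lemma with the previous one and the universal property stated above then yields that the category of pastures has equalizers.
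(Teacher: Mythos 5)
Your proposal is correct and follows essentially the same route as the paper: restrict $q'$ to obtain $u$ after noting $f\circ q'=g\circ q'$ forces $q'(Q')\subseteq Q$, then conclude uniqueness from $q$ being an inclusion. You spell out the morphism verification (in particular that $N_Q$ is the restriction of $N_{P_1}$) in more detail than the paper, which simply asserts that $u$ is clearly a morphism, but the argument is the same.
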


\begin{proof}
We have that $Q$ is a subpasture of $P_1$. Thus, we will show that the image of $Q'$ under $q'$ is contained in $Q$. To see this we note that since $f\circ q'=g\circ q'$, so for $x\in Q'$ we have that $f(q'(x))=g(q'(x))$, so we must have that $q'(x)\in Q$. Now let us define the map $u:Q'\rightarrow Q$ by $u(x)=q'(x)$. This is clearly a morphism of pastures as $q'(x)$ is a morphism; furthermore, we have that $q(u(x))=q'(x)$. 

Now let us show that any such morphism is unique. That is if $u':Q'\rightarrow Q$ is a map such that $q\circ u'=q'$ then $u=u'$. This is quite clear since we have that $q$ is just an inclusion map, so we must have that $q'=q\circ u'=u'$, but we have that $q'=u$. Thus, we indeed have that the map $u$ is unique, and $Q$ satisfies the universal property of equalizers. 
\end{proof}

We summarize our results in the following theorem.

\begin{theorem}
The category of pastures has equalizers. 
\end{theorem}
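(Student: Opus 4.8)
The plan is to assemble the two preceding lemmas into the desired statement, since together they verify, verbatim, the defining data and universal property of an equalizer. Fix an arbitrary pair of parallel morphisms $f,g\colon P_1\to P_2$ in the category of pastures. First I would invoke the first lemma of this section: it produces the subpasture $Q=\{x\in P_1: f(x)=g(x)\}$, confirms that $Q$ is itself a pasture, and shows that the inclusion $q\colon Q\to P_1$ is a morphism of pastures satisfying $f\circ q=g\circ q$. This supplies the object $Q$ together with the structure map $q$ required of an equalizer.

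Next I would invoke the second lemma: given any pasture $Q'$ with a morphism $q'\colon Q'\to P_1$ such that $f\circ q'=g\circ q'$, there exists a unique morphism $u\colon Q'\to Q$ with $q\circ u=q'$. This is precisely the universal property that $(Q,q)$ must satisfy to be the equalizer of $f$ and $g$. Hence $(Q,q)$ is an equalizer of the diagram $P_1\rightrightarrows P_2$.

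Finally, since $f$ and $g$ were an arbitrary pair of parallel morphisms between arbitrary pastures, every such pair admits an equalizer, which is the statement that the category of pastures has equalizers. I do not anticipate any genuine obstacle here: all of the content lives in the two lemmas already established, and the only thing to check is that the universal property stated in those lemmas coincides with the categorical definition of equalizer recalled at the start of the section, which it does on the nose (same diagram, same uniqueness clause). The one point worth a sentence of care is simply noting that the construction of $Q$ depends only on $f$ and $g$ and works for every choice of them, so the conclusion is the unqualified existence of equalizers rather than a statement about a single diagram.
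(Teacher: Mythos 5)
Your proposal is correct and matches the paper exactly: the theorem is stated there as a summary of the two preceding lemmas (the construction of the subpasture $Q$ with its inclusion $q$ satisfying $f\circ q = g\circ q$, and the universal property), which is precisely the assembly you describe. No further comment is needed.
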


\section{coequalizers}
In this section, we shall show that coequalizers exist in the category of pastures. Recall that given pastures $P_1$, $P_2$ and maps $f,g:F_1\rightarrow F_2$ the coequalizer of $f$ and $g$ is a pasture $Q$ along with a map $q:P_2\rightarrow Q$ such that for any other pasture $Q'$ with a map $q': P_2\rightarrow Q'$ there is a unique map $u:Q\rightarrow Q'$ that makes the following diagram commute:

\begin{center}
\begin{tikzcd}
 P_1 \arrow[r,shift left=2.5,"f"]\arrow[r,shift right,"g"] & P_2\arrow[r,"q"]\arrow[dr,"q'"]&Q\arrow[d,dotted,"u"]\\
 & & Q'
\end{tikzcd}
\end{center}

\begin{defn}
Given pastures $P_1$ and $P_2$ and maps $f,g:P_1\rightarrow P_2$ the coequalizer of $f$ and $g$, the coequalizer of $f$ and $g$ is given by $Q$. As a set we have that $Q=\{0\}\cup P_2^\times/\sim$ where the relation $\sim$ is given by $x\sim y$ if $xy\inv=f(z)g(z)\inv$ for some $z\in P_1$. We denote $[x]$ to be the equivalence class of $x$, and for simplicity we will let $[0]=0$. In that case we have that the multiplicative relations are given by $[x][y]=[xy]$ for $[x],[y]\in Q^\times$. The involution of $x$ is given by $-[x]=[-x]$, and the involution will fix $0$. Then the additive relations will be given as follows. If $a+b+c=0$ in $P_2$, then we have that $[a]+[b]+[c]=0$ in $Q$. Finally, we have that the map $q:P_2\rightarrow Q$ will be given by $q(x)=[x]$.
\end{defn}

Now we will begin by showing that $\sim$ is actually an equivalence relation. Then we will show that $Q$ is a pasture, and $q:P_2\rightarrow Q$ is a morphism of pastures. Finally, we shall show that $Q$ satisfies the universal property of being a coequalizer of $f$ and $g$.

\begin{lemma}
The equivalence relation on $P_2^\times$ given by $x\sim y$ if $xy\inv=f(z)g(z)\inv$ for some $z\in P_2^\times$ is indeed an equivalence relation. 
\end{lemma}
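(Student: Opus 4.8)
The plan is to recognize that $\sim$ is exactly the coset equivalence on the abelian group $P_2^\times$ attached to the subset $H := \{\, f(z)g(z)\inv : z \in P_1^\times \,\} \subseteq P_2^\times$, so that checking it is an equivalence relation amounts to checking that $H$ is a subgroup (it contains $1$, is closed under inverses, and is closed under products). Throughout I would use two facts established implicitly by the morphism axioms: $f$ and $g$ restrict to group homomorphisms $P_1^\times \to P_2^\times$ (in particular they preserve inverses of units), and $P_2^\times$ is abelian. With these in hand each of the three properties is a one-line computation.

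For reflexivity, given $x \in P_2^\times$ we have $x x\inv = 1 = f(1)g(1)\inv$ since $f(1) = g(1) = 1$, so $x \sim x$ with witness $z = 1$. For symmetry, suppose $x \sim y$, so $x y\inv = f(z)g(z)\inv$ for some $z \in P_1^\times$; taking $z\inv \in P_1^\times$ as the new witness and using that $f, g$ preserve inverses together with commutativity of $P_2^\times$,
\[
f(z\inv)g(z\inv)\inv = f(z)\inv g(z) = \bigl(f(z)g(z)\inv\bigr)\inv = (x y\inv)\inv = y x\inv,
\]
so $y \sim x$. For transitivity, suppose $x \sim y$ and $y \sim w$, say $x y\inv = f(z_1)g(z_1)\inv$ and $y w\inv = f(z_2)g(z_2)\inv$ with $z_1, z_2 \in P_1^\times$; then, regrouping with commutativity and applying multiplicativity of $f$ and $g$,
\[
x w\inv = (x y\inv)(y w\inv) = f(z_1)g(z_1)\inv f(z_2)g(z_2)\inv = f(z_1 z_2)\,g(z_1 z_2)\inv,
\]
so $x \sim w$ with witness $z_1 z_2$.

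I do not expect any genuine obstacle here: the content is entirely the bookkeeping in the symmetry and transitivity steps, where one must be careful to invoke commutativity of $P_2^\times$ and the preservation of inverses on the unit groups when rewriting expressions such as $g(z)\inv f(z)$ and $g(z_1)\inv f(z_2)$. (One should also remember that the equivalence is only imposed on $P_2^\times$, with $0$ declared to be its own class, so no compatibility with $0$ needs to be checked at this stage.) Hence the proof should be brief.
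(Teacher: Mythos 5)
Your proof is correct and follows essentially the same route as the paper: the same witnesses ($z=1$ for reflexivity, $z\inv$ for symmetry, $z_1z_2$ for transitivity) and the same computations, with the coset-of-a-subgroup framing being only a cosmetic repackaging of those three checks. You also correctly read the witness as ranging over $P_1^\times$ rather than $P_2^\times$, fixing a typo in the lemma's statement.
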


\begin{proof}
To see that $\sim$ is reflexive. This is true since $xx\inv=1=f(1)g(1)\inv$. Now to show that $\sim$ is symmetric, say that $x\sim y$, so there is some $z\in P_1$ such that $xy\inv=f(z)g(z)\inv$. Thus, we have that $yx\inv=f(z\inv)g(z\inv)\inv$, so $y\sim x$. Finally to show that $\sim$ is transitive, say that $x\sim y$ and $y\sim z$. Thus, we have that $xy\inv=f(w_1)g(w_1)\inv$ and $yz\inv=f(w_2)g(w_2)\inv$ for some $w_1,w_2\in P_1$. Thus, we have that $xz\inv=f(w_1)g(w_1)\inv f(w_2)g(w_2)\inv=f(w_1w_2)g(w_1w_2)\inv$. Thus, we have that $\sim$ is transitive and an equivalence relation. 
\end{proof}

\begin{lemma}
The coequalizer $Q$ as defined is a pasture, and the map $q:P_2\rightarrow Q$ is a morphism of pastures. 
\end{lemma}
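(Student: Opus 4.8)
The plan is to verify the pasture axioms for $Q$ in the same order used in the earlier lemmas, and then check that $q$ is a morphism. The first and most important point is that the multiplication $[x][y]=[xy]$ is well-defined on $\sim$-classes. If $x\sim x'$ and $y\sim y'$, write $x(x')\inv=f(z_1)g(z_1)\inv$ and $y(y')\inv=f(z_2)g(z_2)\inv$ for suitable $z_1,z_2\in P_1$; since $f,g$ are multiplicative and $P_2^\times$ is abelian, $xy(x'y')\inv=f(z_1)g(z_1)\inv f(z_2)g(z_2)\inv=f(z_1z_2)g(z_1z_2)\inv$, so $xy\sim x'y'$. The element $0=[0]$ is absorbing by fiat. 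Closure of $Q^\times$ is exactly this well-definedness; $[1]$ is the identity; $[x]\inv=[x\inv]$ (which makes sense because $x\in P_2^\times$); associativity and commutativity are inherited from $P_2^\times$. Hence $Q^\times$ is an abelian group.

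Next I would check the involution $-[x]=[-x]$. It is well-defined: using that $-a=(-1)a$ in any pasture (as recalled in the earlier excerpt) and that $(-1)\inv=-1$ (since $(-1)(-1)=-(-1)=1$), one gets $(-x)(-y)\inv=(-1)(-1)\,x y\inv=xy\inv$, so $x\sim y$ implies $-x\sim -y$. It visibly fixes $0$ and squares to the identity, and $[-x]\in Q^\times$ when $[x]\in Q^\times$ since $-x=(-1)x\in P_2^\times$. It then remains to treat $N_Q:=\{([a],[b],[c]) : a+b+c=0 \text{ in } P_2\}$. Permutation invariance (property (1)) is immediate from that of $N_{P_2}$. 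For multiplication invariance (property (2)): if $[a]+[b]+[c]=0$ then $a'+b'+c'=0$ in $P_2$ for some representatives, and multiplying by any representative $d$ of $[d]$ gives $da'+db'+dc'=0$ in $P_2$, hence $[d][a]+[d][b]+[d][c]=0$ by well-definedness of the product (the case $[d]=0$ being trivial).

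Property (3) is the one step I would write out most carefully, since it is the only place where the ``image'' description of $N_Q$ could conceivably misbehave. Suppose $[a]+[b]+[0]=0$; by definition there are $a'\sim a$, $b'\sim b$, $c'\sim 0$ with $a'+b'+c'=0$ in $P_2$, and since $0$ is the only element of its $\sim$-class, $c'=0$, so $a'+b'+0=0$ in $P_2$, whence $a'=-b'$ and therefore $[a]=[a']=[-b']=-[b']=-[b]$. Conversely, if $[a]=-[b]=[-b]$, then from $(-b)+b+0=0$ in $P_2$ we get $[-b]+[b]+[0]=0$, i.e.\ $[a]+[b]+[0]=0$. This completes the proof that $Q$ is a pasture. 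Finally, $q(x)=[x]$ sends $0\mapsto 0$ and $1\mapsto[1]=1$ by definition, is multiplicative and involution-compatible by the very formulas $[x][y]=[xy]$ and $-[x]=[-x]$, and carries $N_{P_2}$ into $N_Q$ by construction; so $q$ is a morphism of pastures. I expect no serious obstacle: the only genuinely delicate point is property (3) of the nullset, which hinges precisely on the fact that the class of $0$ contains nothing but $0$.
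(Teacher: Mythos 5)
Your proposal is correct and follows essentially the same route as the paper: well-definedness of multiplication via $xy(x'y')\inv=f(z_1z_2)g(z_1z_2)\inv$, the group structure on $Q^\times$, well-definedness of the involution, the three nullset axioms checked on representatives, and the verification that $q$ is a morphism. If anything you are slightly more careful than the paper on property (3), since you prove both directions of the ``if and only if'' and make explicit the key fact that the $\sim$-class of $0$ is $\{0\}$.
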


\begin{proof}
We first note that $[0]$ is the absorbing element under multiplication. Now we will show that $Q^\times$ is an abelian group. The main thing that we will show is that multiplication is well-defined. Thus, for $x,y,x',y'\in P_2$ say that $x\sim x'$ and $y\sim y'$ let us show that $xy\sim x'y'$. Since $x\sim x'$ we have that $x(x')\inv=f(z)g(z)\inv$ for some $z\in P_1$. Similarly, we have that $y(y')\inv=f(z')g(z')\inv$ for some $z'\in P_1$. Thus, we have that $xy(x'y')\inv=f(z)g(z)\inv f(z')g(z')\inv=f(zz')g(zz')\inv$. Thus, we have that $xy\sim x'y'$ that is multiplication is well-defined. 

Now that we have that multiplication is well-defined. Let us show that $Q^\times$ is an abelian group. The identity element will be $[1]$ since $[1][x]=[1x]=[x]$ for any $[x]\in Q$. We also have multiplication is closed since for $[x],[y]\in Q^\times$ we have that $[x][y]=[xy]\in Q^\times$. Now let us show that we have inverses. This is also clear for $[x]\in Q^\times$ we have that $[x\inv]\in Q\times$, and $[x][x\inv]=[xx\inv]=[1]$. Thus, we have that $Q^\times$ is an abelian group.

Now we note that the involution is given by $-[x]=[-x]$ for $[x]\in Q$. Let us show that this is well-defined. To do this we shall show that for $x\sim y$ for $x,y\in P_2$, then we have that $-x\sim -y$. This is clear since $x\sim y$ we have that $xy\inv=f(z)g(z)\inv$ for some $z\in P_1$. We wish to show that $-x\sim -y$, but we have that $(-x)(-y)\inv=(-1)x(-1)y\inv=xy\inv=f(z)g(z)\inv$. Thus, we know that the involution is well-defined, and it is clearly an involution. 

Finally, we wish to show that the nullset $N_Q$ satisfies the additive relations. This is relatively straight forward to see since $[a]+[b]+[c]=0$ in $Q$ if there is some $a'\in[a]$, $b'\in[b]$, and $c'\in[c]$ such that $a'+b'+c'=0$ in $P_2$. Thus, as $P_2$ is a pasture, we know that any permutation of the $a',b',c'$ still adds to zero in $P_2$, so any permutation of $[a],[b],[c]$ still adds to zero in $Q$. That is $N_Q$ satisfies property (1) of being a nullset.

Now say that $[a]+[b]+[c]=0$ in $Q$ and $[d]\in Q$. Thus, we have that there is some $a'\in[a]$, $b'\in[b]$ and $c'\in[c]$ such that $a'+b'+c'=0$ in $P_2$. However, as $P_2$ is a pasture, we must have that $da'+db'+dc'=0$ in $P_2$, so we get that $[d][a]+[d][b]+[d][c]=[da]+[db]+[dc]=0$ in $Q$. Thus, we have that $N_Q$ satisfies property (2) of being a nullset.

Now say that $[a]+[b]+0=0$ in $Q$, so we have that there is some $a'\in [a]$ and $b'\in [b]$ such that $a'+b'+0=0$ in $P_2$, but as $P_2$ is a pasture this tells us that $a'=-b'$. Hence we have that $-[a]=-[a']=[-a']=[-b']=-[b']=-[b]$. That is we have that $N_Q$ satisfies property (3) of being a nullset. This concludes the proof that $Q$ is a pasture. 

Now let us show that $q:P_2\rightarrow Q$ given by $q(x)=[x]$ is a morphism of pastures. We see that $q(0)=[0]=0$ and $q(1)=[1]$. Now let us show that $q:P_2^\times\rightarrow Q^\times$ is an abelian group homomorphism. To see this we simply note that $q(xy)=[xy]=[x][y]=q(x)q(y)$. Thus, is is an abelian group homomorphism. Now let us show that $q$ respects the involution. This is also clear since $q(-x)=[-x]=-[x]=-q(x)$. Finally, to show that $q$ respects additive relations. We note by definition that if $a+b+c=0$ in $P_2$, then we have that $q(a)+q(b)+q(c)=[a]+[b]+[c]=0$ in $Q$. Thus, we have that $q$ is indeed a morphism of pastures.
\end{proof}

Now it remains to show that $Q$ satisfies the universal property of coequalizers. That is if $Q'$ is another pasture and $q':P_2\rightarrow Q'$ is a morphism of pastures such that $q'\circ f=q'\circ g$, then there is a unique morphism $u:Q\rightarrow Q'$ such that $u\circ q=q'$.

\begin{lemma}
If $Q'$ is a pasture and $q':P_2\rightarrow Q'$ is a morphism such that $q'\circ f=q'\circ g$, then there exists a unique morphism $u:Q\rightarrow Q'$ such that $u\circ q=q'$. That is $Q$ satisfies the universal property of being a coequalizer of $f$ and $g$. 
\end{lemma}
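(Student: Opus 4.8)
The plan is to mimic the construction used for fibered coproducts: define $u\colon Q\to Q'$ on equivalence classes by $u(0)=0$ and $u([x])=q'(x)$ for $x\in P_2^\times$, and then check in turn that $u$ is well-defined, that it is a morphism of pastures, that $u\circ q=q'$, and that it is the unique morphism with this property.

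The only step that uses the hypothesis $q'\circ f=q'\circ g$ in an essential way --- and the one I expect to be the main obstacle --- is well-definedness. Given $x\sim y$ in $P_2^\times$, by definition $xy\inv=f(z)g(z)\inv$ for some $z\in P_1$, and since $q'$ is a morphism we get $q'(x)q'(y)\inv=q'(xy\inv)=q'(f(z))\,q'(g(z))\inv$; the hypothesis forces $q'(f(z))=q'(g(z))$, so $q'(x)q'(y)\inv=1$ and hence $q'(x)=q'(y)$. Once this is in place, everything else is a routine verification.

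For the morphism axioms: $u(0)=0$ by definition and $u([1])=q'(1)=1$; multiplicativity on $Q^\times$ follows from $u([x][y])=u([xy])=q'(xy)=q'(x)q'(y)=u([x])u([y])$; the involution is preserved since $u(-[x])=u([-x])=q'(-x)=-q'(x)=-u([x])$; and if $[a]+[b]+[c]=0$ in $Q$, then by the construction of $N_Q$ there are representatives $a'\in[a]$, $b'\in[b]$, $c'\in[c]$ with $a'+b'+c'=0$ in $P_2$, whence $q'(a')+q'(b')+q'(c')=0$ in $Q'$ because $q'$ is a morphism, and by well-definedness $u([a])+u([b])+u([c])=0$.

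Finally, $u\circ q=q'$ is immediate since $u(q(x))=u([x])=q'(x)$ for $x\in P_2$ (and both sides send $0$ to $0$). For uniqueness, I would observe that $q$ is surjective --- every class is of the form $[x]=q(x)$, and $0=q(0)$ --- so any morphism $u'$ with $u'\circ q=q'$ must satisfy $u'([x])=u'(q(x))=q'(x)=u([x])$, giving $u'=u$. I do not anticipate any difficulty beyond the well-definedness check; the formula for $u$ is forced by the requirement $u\circ q=q'$, which is precisely what makes uniqueness trivial. Combining this lemma with the previous two then yields the theorem that the category of pastures has coequalizers.
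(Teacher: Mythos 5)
Your proposal is correct and follows essentially the same route as the paper: define $u([x])=q'(x)$, use $q'\circ f=q'\circ g$ to show $q'(xy\inv)=q'(f(z))q'(g(z))\inv=1$ for well-definedness, verify the morphism axioms via representatives of the additive relations, and derive uniqueness from the surjectivity of $q$. The only (harmless) difference is that you also check preservation of the involution explicitly, which the paper omits in this lemma.
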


\begin{proof}

Let us define the morphism $u:Q\rightarrow Q'$ as follows $u([x])=q'(x)$. We note that by defining $u$ in this way it is clear that $u(q(x))=u([x])=q'(x)$, so we have that the diagram will commute. Thus, we simply need to show that $u$ is well-defined, and is indeed a morphism of pastures. Then we will show that $u$ is unique. 

Let us first by showing that $u$ is well-defined. This amounts to showing that for $x\sim y$ for $x,y\in P_2^\times$, we have that $q'(x)=q'(y)$. We note that we restrict to the case of $P_2^\times$ since we have that $[0]=0$, and $u(0)=q'(0)=0$. Since we have that $x\sim y$ we know that $xy\inv=f(z)g(z)\inv$ for some $z\in P_1^
\times$. We shall show that $q'(xy\inv)=1$ as this will imply that $q'(x)=q'(y)$. Now let us consider $q'(xy\inv)=q'(f(z)g(z)\inv)=q'(f(z))q'(g(z)\inv)=q'(f(z))q'(g(z))\inv=1$ since we know that $q'(f(z))=q'(g(z))$. Thus, we have that our function $u$ is well-defined. 

Now let us show that $u$ is in fact a morphism of pastures. First we note that $u([1])=q'(1)=1$, and $u([0])=q'(0)=0$. Now let us show that $u$ is an abelian group homomorphism of $Q^\times$ and $(Q')^\times$. This is clear since we have that $u([x][y])=q'(xy)=q'(x)q'(y)=u([x])u([y])$. Now let us show that $u$ respects the additive relations. This is also clear. Since if $[a]+[b]+[c]=0$ in $Q$, then we must have that there is some $a'\in [a]$, $b'\in [b]$, and $c'\in[c]$ such that $a'+b'+c'=0$ in $P_2$. However, as $q'$ is a morphism of pastures, we have that $q'(a')+q'(b')+q'(c')=0$ in $Q'$. Thus, we have that $u([a])+u([b])+u([c])=u([a'])+u([b'])+u([c'])=q'(a')+q'(b')+q'(c')=0$. This tells us that $u$ preserves the additive relations, and is hence a morphism of pastures. 

The final thing which remains to show is that $u$ is the unique morphism with this property. Thus say that $u':Q\rightarrow Q'$ is a morphism such that $u'\circ q=q'$. Let us show that $u'=u$. Since $u'$ is a morphism we have that $u'([0])=0$. Thus, let us show that $u'([x])=u([x])$ for $[x]\in Q^\times$. This is clear, since we know that $u([x])=q'(x)$. Furthermore, since we have that $u'\circ q=q'$. We must have that $u'(q(x))=q'(x)$, so it is the case that $u'(q(x))=u'([x])=q'(x)=u(x)$. Thus, we have that $u$ is unique, and $Q$ satisfies the universal property of being the coequalizer of $f$ and $g$. 
\end{proof}

We summarize our results as the following theorem.

\begin{theorem}
The category of pastures has coequalizers. 
\end{theorem}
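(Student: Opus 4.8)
The plan is simply to assemble the three preceding lemmas of this section. Recall that the coequalizer of $f,g\colon P_1\to P_2$ is proposed to be the pair $(Q,q)$, where $Q=\{0\}\cup P_2^\times/\sim$ with $x\sim y$ if and only if $xy^{-1}=f(z)g(z)^{-1}$ for some $z\in P_1^\times$, and $q\colon P_2\to Q$ is the quotient map $q(x)=[x]$. First I would invoke the lemma establishing that $\sim$ is an equivalence relation, so that $Q$ is a well-defined set equipped with the well-defined multiplication, involution, and nullset described in the definition.

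Next I would invoke the lemma showing that $Q$ is a pasture and that $q$ is a morphism of pastures. To this I would append the short observation that $q\circ f=q\circ g$: for any $x\in P_1^\times$, taking $z=x$ in the defining relation gives $f(x)g(x)^{-1}=f(x)g(x)^{-1}$, hence $f(x)\sim g(x)$ and $q(f(x))=[f(x)]=[g(x)]=q(g(x))$; and both composites send $0$ to $0$. This is exactly the condition that makes $(Q,q)$ a cocone under the parallel pair $f,g$.

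Finally I would invoke the universal-property lemma: for any pasture $Q'$ and morphism $q'\colon P_2\to Q'$ with $q'\circ f=q'\circ g$, there is a unique morphism $u\colon Q\to Q'$ with $u\circ q=q'$. Taken together, these three facts say precisely that $(Q,q)$ is the coequalizer of $f$ and $g$ in the category of pastures; since $f$ and $g$ were an arbitrary parallel pair, the category of pastures has all coequalizers.

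As for the main obstacle: essentially all the content has already been absorbed into the lemmas, so the theorem itself requires no new argument beyond quoting them. If one were to prove it without the lemmas, the delicate point would be the well-definedness of the induced map $u$ on equivalence classes, i.e.\ showing that $x\sim y$ forces $q'(x)=q'(y)$; this is where the hypothesis $q'\circ f=q'\circ g$ enters, together with the fact that $q'$ restricts to a group homomorphism on units.
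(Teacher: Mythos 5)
Your proposal is correct and follows essentially the same route as the paper, which likewise presents this theorem as a direct assembly of the three preceding lemmas on the equivalence relation, the pasture structure of $Q$ with the morphism $q$, and the universal property. In fact your explicit one-line check that $q\circ f=q\circ g$ (taking $z=x$ in the defining relation so that $f(x)\sim g(x)$) supplies a cocone condition the paper never verifies, so your write-up is, if anything, slightly more complete.
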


\section{Arbitrary Products}

In this section, we shall show that arbitrary products exist in the category of pastures. Thus, if $I$ is some indexing set, and for each $i\in I$ we have some pasture $P_i$. Then the product of the $P_i$ is some pasture $\prod_{i\in I}P_i$ along with maps $\pi_j:\prod_{i\in I}P_i\rightarrow P_j$ for $j\in I$. Such that for any pasture $P'$ with maps $f_i:P'\rightarrow P_i$ for each $i\in I$, then there is a unique morphism $f_*:P'\rightarrow \prod_{i\in I}P_i$ such that $\pi_i\circ f_*=f_i$ for all $i\in I$. 
\begin{defn}
Given some indexing set $I$, and for each $i\in I$ we have pastures $P_i$, then we have the product $\prod_{i\in I}P_i$ is a pasture. As a set, we have that $\prod_{i\in I}P_i=\{0\}\cup \prod_{i\in I} P_i^\times$. Now we define the multiplication on $\prod_{i\in I}P_i$ as follows. We let $0$ be an absorbing element, and for $(x_i)_{i\in I}, (y_i)_{i\in I}\in (\prod_{i\in I}P_i)^\times$, we define the product $(x_i)_{i\in I}(y_i)_{i\in I}=(x_iy_i)_{i\in I}$. Then the involution we will have that the involution fixes $0$, and for $(x_i)_{i\in I}\in (\prod_{i\in I}P_i)^\times$, we have that $-(x_i)_{i\in I}=(-x_i)_{i\in I}$. Now to define the additive relations, we say $(a_i)_{i\in I}+(b_i)_{i\in I}+(c_i)_{i\in I}=0$ if $a_i+b_i+c_i=0$ in $P_i$ for each $i\in I$. For simplicity in this definition, we let $0=(0)_{i\in I}$. Now we define the maps $\pi_j:\prod_{i\in I}P_i\rightarrow P_j$ by $\pi_j((x_i)_{i\in I})=x_j$. 
\end{defn}

We shall begin by showing that $\prod_{i\in I}P_i$ is indeed a pasture. We will then go ahead to show that the $\pi_j$ are morphisms of pastures. Finally we will show that $\prod_{i\in I}P_i$ satisfies the universal property of being a product. 

\begin{lemma}
$\prod_{i\in I}P_i$ is a pasture. 
\end{lemma}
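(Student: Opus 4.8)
The plan is to verify the pasture axioms for $\prod_{i\in I} P_i$ by checking them coordinatewise, exactly as in the earlier lemmas for the fibered product, only now over an arbitrary index set rather than two factors. First I would establish that $(\prod_{i\in I} P_i)^\times = \prod_{i\in I} P_i^\times$ is an abelian group: the element $(1)_{i\in I}$ (which lies in the product since each $1\in P_i^\times$) is a two-sided identity because multiplication is defined coordinatewise; the inverse of $(x_i)_{i\in I}$ is $(x_i^{-1})_{i\in I}$, which again lies in the product since each $x_i^{-1}\in P_i^\times$; closure under multiplication is immediate since $x_i y_i \in P_i^\times$ for each $i$; and commutativity descends from the commutativity of each $P_i^\times$. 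Together with the stipulation that $0$ is absorbing, this shows the multiplicative monoid-with-zero structure is correct and $(\prod_{i\in I} P_i)^\times = \prod_{i\in I} P_i - \{0\}$.

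Next I would check the involution: it fixes $0$ by fiat, and for a nonzero element $-(x_i)_{i\in I} = (-x_i)_{i\in I}$ lies in the product because $-x_i \in P_i^\times$ for every $i$ (using that $-1\in P_i^\times$ and $P_i^\times$ is a group, or directly that the involution on each $P_i$ restricts to $P_i^\times$); applying it twice returns $(x_i)_{i\in I}$ since each $P_i$'s involution is an involution. Then I would verify the three nullset axioms for $N_{\prod P_i}$, where $(a_i)_{i\in I}+(b_i)_{i\in I}+(c_i)_{i\in I}=0$ means $a_i+b_i+c_i=0$ in $P_i$ for all $i$. Axiom (1), permutation invariance, holds because a permutation of the triple is a permutation of each coordinate triple, and each $N_{P_i}$ is permutation invariant. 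Axiom (2), multiplicativity, holds because multiplying the triple by $(d_i)_{i\in I}$ multiplies each coordinate triple by $d_i$, and each $N_{P_i}$ is closed under multiplication, so $d_i a_i + d_i b_i + d_i c_i = 0$ in $P_i$ for every $i$. Axiom (3): $(a_i)_{i\in I}+(b_i)_{i\in I}+0=0$ holds iff $a_i + b_i + 0 = 0$ in $P_i$ for all $i$, which by axiom (3) in each $P_i$ holds iff $a_i = -b_i$ for all $i$, i.e. iff $(a_i)_{i\in I} = -(b_i)_{i\in I}$.

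I do not anticipate a genuine obstacle here; the argument is a routine coordinatewise lift of the two-factor case, and the only point requiring a moment's care is making sure that all the structure maps and constructed elements (the identity, inverses, involutes) actually land in $\prod_{i\in I} P_i^\times$ rather than merely in $\prod_{i\in I} P_i$ — but this is automatic since each operation is applied coordinatewise and each $P_i^\times$ is closed under it. One small bookkeeping subtlety is the degenerate case $I = \emptyset$, where $\prod_{i\in I} P_i$ is the one-element pasture (the final object), for which all axioms hold trivially; I would note this in passing. With the pasture axioms verified, the remaining lemmas (that the projections $\pi_j$ are morphisms and that the universal property holds) will then follow by the same coordinatewise reasoning used for the fibered product, defining $f_*(x) = (f_i(x))_{i\in I}$ for the universal map.
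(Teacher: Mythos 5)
Your proposal is correct and follows essentially the same route as the paper: a coordinatewise verification that $(\prod_{i\in I}P_i)^\times=\prod_{i\in I}P_i^\times$ is an abelian group, that the involution is well defined, and that the three nullset axioms descend from the corresponding axioms in each $P_i$. The only additions beyond the paper's argument are minor niceties (the remark on $I=\emptyset$ and checking both directions of axiom (3)), which do not change the approach.
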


\begin{proof}
Let us show that $\prod_{i\in I}P_i$ is a pasture. We note by definition we have that $0$ is an absorbing element under multiplication. Now let us show that $(\prod_{i\in I}P_i)^\times$ is an abelian group. Let us begin by showing that the multiplication is closed. We note for $(x_i)_{i\in I}, (y_i)_{i\in I}\in (\prod_{i\in I}P_i)^\times$ we have that for each $i\in I$ that $x_i,y_i\in P_i^\times$. Thus, when we take the product $(x_i)_{i\in I}(y_i)_{i\in I}=(x_iy_i)_{i\in I}$ Now since for each $i\in I$, we have that $x_i,y_i\in P_i^\times$, so $x_iy_i\in P_i^\times$. Thus, we have that $(x_iy_i)_{i\in I}\in \prod_{i\in I}P_i^\times=(\prod_{i\in I}P_i)^\times$, so $(\prod_{i\in I}P_i)^\times$ is closed under multiplication. 

Now let us show that we have an identity element. We note that for each $i\in I$, $P_i$ is a unit, so there is some identity $1_i\in P_i$. I claim that the identity element of $(\prod_{i\in I}P_i)^\times$ is $(1_i)_{i\in I}$. This is indeed true since we have that $(1_i)_{i\in I}(x_i)_{i\in I}=(1_ix_i)_{i\in I}=(x_i)_{i\in I}$. Thus, we have an identity element of $(\prod_{i\in I}P_i)^\times$.

Now let us show that every element $(x_i)_{i\in I}\in(\prod_{i\in I}P_i)^\times$ has an inverse. This is indeed true since if $(x_i)_{i\in I}\in (\prod_{i\in I}P_i)^\times$. Then we have that for each $i\in I$ that $x_i\in P_i^\times$. Thus, as each $P_i$ is a pasture, we have that for each $x_i\in P_i^\times$ there is an element $x_i\inv\in P_i$ such that $x_ix_i\inv=1_i$. I claim that $(x_i\inv)_{i\in I}$ is the inverse of $(x_i)_{i\in I}\in (\prod_{i\in I}P_i)\times$. This is clear since $(x_i)_{i\in I}(x_i\inv)_{i\in I}=(x_ix_i\inv)_{i\in I}=(1_i)_{i\in I}$. Furthermore, we note that $(x_i\inv)_{i\in I}\in (\prod_{i\in I}P_i)^\times$. This is clear since for each $i\in I$, we have that $(x_i\inv)\in P_i^\times$, so $(x_i\inv)_{i\in I}\in \prod_{i\in I}P_i^\times=(\prod_{i\in I}P_i)^\times$. This concludes the proof that $(\prod_{i\in I}P_i)^\times$ is an abelian group. 

Now we make a note that the involution of $\prod_{i\in I}P_i$ is an involution, and $-(x_i)_{i\in I}$ is an element of $\prod_{i\in I}P_i$. By definition the involution fixes $0$. Thus, for $(x_i)_{i\in I}\in (\prod_{i\in I}P_i)^\times$. Then we have that $(-x_i)_{i\in I}\in (\prod_{i\in I}P_i)^\times$ as for each $i\in I$ we have that $-x_i\in P_i^\times$. Thus, the involution will be well-defined. 

Now let us show that $N_{\prod_{i\in I}P_i}$ satisfies the properties of a nullset. Say that we have that $(a_i)_{i\in I}+(b_i)_{i\in I}+(c_i)_{i\in I}=0$ in $\prod_{i\in I}P_i$. Thus, we have that for each $i\in I$ that $a_i+b_i+c_i=0$ in $P_i$. Thus, if we permute the order of addition, since each $P_i$ is a pasture, we know that any permutation of $a_i+b_i+c_i$ will also add to $0$ in $P_i$. Hence, we will have that any permutation of $(a_i)_{i\in I}+(b_i)_{i\in I}+(c_i)_{i\in I}$ also sums to $0$. That is $N_{\prod_{i\in I}P_i}$ satisfies property (1) of being a nullset. 

Now say to show property (2) of being a nullset is satisfied. Say that $(a_i)_{i\in I}+(b_i)_{i\in I}+(c_i)_{i\in I}=0$ in $\prod_{i\in I}P_i$, and $(d_i)_{i\in I}\in \prod_{i\in I}P_i$. let us show that $(d_i)_{i\in I}(a_i)_{i\in I}+(d_i)_{i\in I}(b_i)_{i\in I}+(d_i)_{i\in I}(c_i)_{i\in I}=(d_ia_i)_{i\in I}+(d_ib_i)_{i\in I}+(d_ic_i)_{i\in I}=0$ in $\prod_{i\in I}P_i$. This corresponds to showing that for every $i\in I$ that we have that $d_ia_i+d_ib_i+d_ic_i=0$ in $P_i$. This is true. Since we know that $a_i+b_i+c_i=0$ in $P_i$ for every $i\in I$, and each $P_i$ is a pasture. We then have that $d_ia_i+d_ib_i+d_ic_i=0$ in $P_i$. Thus, we have that $N_{\prod_{i\in I}P_i}$ satisfies property (2) of being a nullset.

Finally, let us show that $N_{\prod_{i\in I}P_i}$ satisfies property (3) of being a nullset. That is say that we have that $(a_i)_{i\in I}+(b_i)_{i\in I}+(0)_{i\in I}=0$ in $\prod_{i\in I}P_i$, then we have that $(a_i)_{i\in I}=-(b_i)_{i\in I}$. This is equivalent to saying that $a_i=-b_i$ for every $i\in I$. However, we have that $a_i+b_i+0=0$ for each $i\in I$. Thus as each $P_i$ is a pasture, we have that $a_i=-b_i$ for each $i\in I$ that is $N_{\prod_{i\in I}P_i}$ satisfies property (3) of being a nullset. This concludes the proof that $\prod_{i\in I}P_i$ is a pasture. 
\end{proof}

\begin{lemma}
For each $j\in I$ the map $\pi_j:\prod_{i\in I}P_i\rightarrow P_j$ is a morphism of pastures. 
\end{lemma}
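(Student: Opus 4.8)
The plan is to verify the four defining conditions of a morphism of pastures directly from the definitions, exactly as was done for $\pi_1$ in the fibered product section. First I would record that $\pi_j(0)=0$ holds by fiat, since $0$ is the distinguished absorbing element of $\prod_{i\in I}P_i$ and $\pi_j$ is defined to send it to $0\in P_j$; and that $\pi_j(1)=\pi_j((1_i)_{i\in I})=1_j=1$, using that $(1_i)_{i\in I}$ is the multiplicative identity of $\prod_{i\in I}P_i$ established in the previous lemma.

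Next I would check multiplicativity on the group of units: for $(x_i)_{i\in I},(y_i)_{i\in I}\in(\prod_{i\in I}P_i)^\times$ we have
\[
\pi_j\big((x_i)_{i\in I}(y_i)_{i\in I}\big)=\pi_j\big((x_iy_i)_{i\in I}\big)=x_jy_j=\pi_j\big((x_i)_{i\in I}\big)\,\pi_j\big((y_i)_{i\in I}\big),
\]
and multiplicativity involving $0$ is immediate since both sides are $0$. The involution condition is just as quick: $\pi_j(-(x_i)_{i\in I})=\pi_j((-x_i)_{i\in I})=-x_j=-\pi_j((x_i)_{i\in I})$, and $\pi_j$ fixes $0$, so the condition also holds there.

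Finally I would treat the additive relations. Suppose $(a_i)_{i\in I}+(b_i)_{i\in I}+(c_i)_{i\in I}=0$ in $\prod_{i\in I}P_i$. By the very definition of the nullset of the product this means $a_i+b_i+c_i=0$ in $P_i$ for every $i\in I$; specializing to $i=j$ gives $a_j+b_j+c_j=0$ in $P_j$, i.e. $\pi_j((a_i)_{i\in I})+\pi_j((b_i)_{i\in I})+\pi_j((c_i)_{i\in I})=0$. Relations involving the element $0=(0)_{i\in I}$ are handled the same way. Hence $\pi_j$ satisfies all four conditions and is a morphism of pastures.

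As for the main obstacle: there really is none here — every step is an unwinding of the component-wise definition of $\prod_{i\in I}P_i$, and the only point requiring mild care is the bookkeeping around the absorbing element $0$, which is not of the form $(x_i)_{i\in I}$ and so must be checked separately in each clause. This is the same phenomenon already encountered in the $\pi_1,\pi_2$ lemma for fibered products.
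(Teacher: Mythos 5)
Your proposal is correct and follows essentially the same route as the paper: verify preservation of $0$ and $1$, multiplicativity and the involution componentwise, and read off the additive relations from the componentwise definition of the nullset. The only difference is your slightly more explicit bookkeeping around the absorbing element $0$, which the paper leaves implicit but which changes nothing substantive.
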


\begin{proof}
Say that $j\in I$, and consider the map $\pi_j:\prod_{i\in I}P_i\rightarrow P_j$ given by $\pi_j((x_i)_{i\in I})=x_j$. Let us show that this a morphism of pastures. We first note that $\pi_j((0)_{i\in I})=0$ and $\pi_j((1_i)_{i\in I})=1_j$. Thus, we have that $\pi_j$ preserves the zero and multiplicative identity. 

Now let us show that $\pi_j:(\prod_{i\in I}P_i)^\times\rightarrow P_j^\times$ is a homomorphism of abelian groups. This is clear since we have that $\pi_j((x_i)_{i\in I}(y_i)_{i\in I})=\pi_j((x_iy_i)_{i\in I})=x_jy_j=\pi_j((x_i)_{i\in I})\pi_j((y_i)_{i\in I})$. Thus, we have that $\pi_j$ restricts to a homomorphism of abelian groups. 

Now let us show that $\pi_j$ respects the involution. This is clear since $\pi_j(-(x_i)_{i\in I})=\pi_j((-x_i)_{i\in I})=-x_j=-\pi_j((x_i)_{i\in I})$.

Finally, let us show that $\pi_j$ preserves the additive relations. Say that $(a_i)_{i\in I}+(b_i)_{i\in I}+(c_i)_{i\in I}=0$ in $\prod_{i\in I}P_i$, then we know that $a_i+b_i+c_i=0$ in $P_i$ for each $i\in I$. Thus, we have that $\pi_j((a_i)_{i\in I})+\pi_j((b_i)_{i\in I})+\pi_j((c_i)_{i\in I})=a_j+b_j+c_j=0$ in $P_j$ that is $\pi_j$ preserves the additive relations. Hence, we have that $\pi_j$ is a morphism of pastures. 
\end{proof}

\begin{lemma}
Say that $P'$ is a pasture, and for each $i\in I$ there is a morphism $f_i:P'\rightarrow P_i$, then there is a unique morphism $f_*:P'\rightarrow \prod_{i\in I}P_i$ such that $\pi_j\circ f_*=f_j$ for each $j\in I$. 
\end{lemma}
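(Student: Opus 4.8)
The plan is to define $f_*$ by the only formula compatible with the requirement $\pi_j\circ f_*=f_j$: set $f_*(0)=0$ and $f_*(x)=(f_i(x))_{i\in I}$ for $x\in (P')^\times$. Then I would verify four things in order: that $f_*$ is well-defined (its values actually lie in $\prod_{i\in I}P_i$), that $f_*$ is a morphism of pastures, that $\pi_j\circ f_*=f_j$ for every $j\in I$, and that $f_*$ is the unique morphism with this last property.

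For well-definedness the one point worth isolating is that a morphism of pastures carries units to units: if $x\in(P')^\times$ then $f_i(x)f_i(x\inv)=f_i(xx\inv)=f_i(1)=1$, so $f_i(x)\in P_i^\times$ with inverse $f_i(x\inv)$. Hence for $x\neq 0$ we get $(f_i(x))_{i\in I}\in\prod_{i\in I}P_i^\times=(\prod_{i\in I}P_i)^\times\subset\prod_{i\in I}P_i$, as required. That $f_*$ is a morphism is then a coordinatewise check against the previously established pasture structure on $\prod_{i\in I}P_i$: we have $f_*(1)=(f_i(1))_{i\in I}=(1_i)_{i\in I}$ and $f_*(0)=0$; for $x,y\in(P')^\times$, $f_*(xy)=(f_i(xy))_{i\in I}=(f_i(x)f_i(y))_{i\in I}=f_*(x)f_*(y)$, so $f_*$ restricts to a group homomorphism on units; $f_*(-x)=(f_i(-x))_{i\in I}=(-f_i(x))_{i\in I}=-f_*(x)$ gives the involution property; and if $x+y+z=0$ in $P'$ then $f_i(x)+f_i(y)+f_i(z)=0$ in $P_i$ for every $i$ because each $f_i$ is a morphism, which is exactly the defining condition for $f_*(x)+f_*(y)+f_*(z)=0$ in $\prod_{i\in I}P_i$. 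The commutation is immediate: $\pi_j(f_*(x))=\pi_j((f_i(x))_{i\in I})=f_j(x)$, and it also holds at $0$ since $\pi_j(0)=0=f_j(0)$.

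For uniqueness, suppose $g:P'\rightarrow\prod_{i\in I}P_i$ is a morphism with $\pi_j\circ g=f_j$ for all $j$. Then $g(0)=0=f_*(0)$ since $g$ is a morphism, and for $x\in(P')^\times$ the element $g(x)$ has $j$-th coordinate $\pi_j(g(x))=f_j(x)$ for every $j\in I$; moreover $g(x)\neq 0$ because $g(x)g(x\inv)=g(1)=(1_i)_{i\in I}\neq 0$, so $g(x)\in(\prod_{i\in I}P_i)^\times=\prod_{i\in I}P_i^\times$ and is therefore determined by its coordinates, giving $g(x)=(f_i(x))_{i\in I}=f_*(x)$. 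Hence $g=f_*$.

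I expect no genuine obstacle in this argument; the only step that requires a moment's thought is the observation that pasture morphisms send nonzero elements to nonzero elements, which is what makes both the well-definedness of $f_*$ and the coordinate-comparison step of the uniqueness argument go through.
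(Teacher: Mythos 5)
Your proposal is correct and follows essentially the same route as the paper's proof: define $f_*(x)=(f_i(x))_{i\in I}$, verify coordinatewise that it is a morphism, check $\pi_j\circ f_*=f_j$, and deduce uniqueness by comparing coordinates. The only difference is that you explicitly justify that morphisms send units to units (so that $f_*(x)$ and $g(x)$ land in $(\prod_{i\in I}P_i)^\times$ for $x\neq 0$), a point the paper uses but leaves implicit; this is a welcome bit of added care rather than a different argument.
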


\begin{proof}
We shall first define what the map $f_*$ and show that it is a morphism, then we shall show for each $j\in I$ that $\pi_j\circ f_*=f_j$. Lastly, we will show such a morphism $f_*$ must be unique. 

We define $f_*:P'\rightarrow \prod_{i\in I}P_i$ as $f_*(x)=(f_i(x))_{i\in I}$, we note that $f_*(0)=0$, and $f_*(1)=(f_i(1))_{i\in I}=(1_i)_{i\in I}$, so we have that $f_*$ preserves the zero and multiplicative identity. Now let us show that $f_*$ will be an abelian group homomorphism of $(P')^\times$ and $(\prod_{i\in I}P_i)^\times$. Thus, let $x\in (P')^\times$. Then we have that $f_i(x)\in P_i^\times$ for each $i\in I$. Hence, we have that $f_*(x)=(f_i(x))_{i\in I}\in (\prod_{i\in I}P_i)^\times$. Furthermore, we have that $f_*(xy)=(f_i(xy))_{i\in I}=(f_i(x)f_i(y))_{i\in I}=(f_i(x))_{i\in I}(f_i(y))_{i\in I}=f_*(x)f_*(y)$. Now we have that $f_*$ will preserve the involution since $f_*(-x)=(f_i(-x))_{i\in I}=(-f_i(x))_{i\in I}=-(f_i(x))_{i\in I}=-f_*(x)$. 

Now consider $a,b,c\in P'$ such that $a+b+c=0$ in $P'$. Now we wish to show that $f_*(a)+f_*(b)+f_*(c)=(f_i(a))_{i\in I}+(f_i(b))_{i\in I}+(f_i(c))_{i\in I}=0$. That is we wish to show that $f_i(a)+f_i(b)+f_i(c)=0$ in $P_i$ for each $i\in I$. However, since each $f_i$ is a morphism of pastures, and we have that $a+b+c=0$ in $P'$, then $f_i(a)+f_i(b)+f_i(c)=0$ in $P_i$ for every $i\in I$. That is we are have that $f_*$ will preserve the additive relations, and is indeed a morphism of pastures.

Now it let us show that $\pi_j\circ f_*=f_j$ for every $j\in I$. This is clear as we have that $\pi_j(f_*(x))=\pi_j((f_i(x)_{i\in I})=f_j(x)$.

Finally, let us show that if we have some other morphism say $g_*:P'\rightarrow \prod_{i\in I}P_i$ such that $\pi_j\circ g_*=f_j$ for every $j\in I$. Let us show that $g_*=f_*$. To do this, say that $x\in P'$, let us show that $g_*(x)=f_*(x)$. We know that $g_*(x)=((y_i)_{i\in I})$, and by definition we have that $f_*(x)=((f_i(x)_{i\in I})$. To show that these functions are the same, we will show for every $j\in I$ that $y_j=f_j(x)$. This follows from the fact that $\pi_j\circ g_*=f_j$. Thus, we have that $\pi_j(g_*(x))=y_j=f_j(x)$, so $g_*=f_*$ and $f_*$ is unique. 
\end{proof}

We summarize our findings in the following theorem.

\begin{theorem}
The category of pastures has arbitrary products. 
\end{theorem}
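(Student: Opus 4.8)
The plan is simply to assemble the three lemmas established above; no new construction or computation is needed. First I would fix an arbitrary indexing set $I$ together with a family $(P_i)_{i\in I}$ of pastures, and recall the explicit candidate $\prod_{i\in I}P_i$ with its projections $\pi_j$ from the definition. By the first lemma, $\prod_{i\in I}P_i$ --- equipped with componentwise multiplication, componentwise involution, and the nullset consisting of those triples that are null in every coordinate --- satisfies all the pasture axioms, hence is an object of the category. By the second lemma, each $\pi_j\colon\prod_{i\in I}P_i\rightarrow P_j$ is a morphism of pastures. By the third lemma, for every pasture $P'$ equipped with morphisms $f_i\colon P'\rightarrow P_i$ (one for each $i\in I$) there is a unique morphism $f_*\colon P'\rightarrow\prod_{i\in I}P_i$ satisfying $\pi_i\circ f_*=f_i$ for all $i$.

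Next I would observe that these three facts are, verbatim, the statement that $\bigl(\prod_{i\in I}P_i,\,(\pi_j)_{j\in I}\bigr)$ is a product of the family $(P_i)_{i\in I}$ in the category of pastures. Since the set $I$ was arbitrary, this yields all small products. As a sanity check I would also note the degenerate cases: for $|I|=1$ the product is (isomorphic to) the single pasture, and for $I=\emptyset$ one recovers the two-element pasture in which every triple lies in the nullset, which is the terminal object --- consistent with the empty product being terminal.

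I do not expect any genuine obstacle at the level of this theorem itself: the content is entirely carried out in the three preceding lemmas, and what remains is only to package them against the definition of a categorical product. The one point worth flagging is that the uniqueness half of the universal property (the third lemma) rests on the fact that an element of $\prod_{i\in I}P_i$ is determined by the tuple of its images under the $\pi_j$; equivalently, the family $(\pi_j)_{j\in I}$ is jointly monic. That observation is exactly what links the hands-on set-theoretic construction to the categorical definition, and it is the step I would make sure is stated cleanly.
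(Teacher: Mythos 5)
Your proposal is correct and matches the paper exactly: the theorem is stated as a summary of the three preceding lemmas (that $\prod_{i\in I}P_i$ is a pasture, that each $\pi_j$ is a morphism, and that the universal property holds), and no further argument is needed. Your added remarks on the degenerate cases and on the projections being jointly monic are accurate but not part of the paper's (essentially empty) proof.
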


Now using Theorems 2.5, 4.4, and 6.5, we have fibered products, equalizers, and arbitrary products in the category of pastures. Thus, it is a well-known theorem  that a category having these properties will have arbitrary limits of diagrams indexed by a small category. We state this as the following theorem (see for example section 2 of chapter 5 of \cite{mac2013categories}). 

\begin{theorem}
The category of pastures has limits of diagrams indexed by any small category. 
\end{theorem}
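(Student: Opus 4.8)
The plan is to invoke the standard categorical fact that a category having all finite limits and all small products has all small limits, but since the preceding sections of this paper have actually produced a slightly different toolkit --- fibered products (Theorem 2.5), equalizers (Theorem 4.4), and arbitrary products (Theorem 6.5) --- I would instead cite the more classical criterion: a category with equalizers and all (small, including empty) products has all small limits. In fact the cleanest route is through the well-known reconstruction of an arbitrary limit as an equalizer of a pair of maps between products. So the first step is to recall that a terminal object exists in the category of pastures (this is the empty product $\prod_{i\in\emptyset}P_i$, which by the definition in Section 6 is the one-element-times... more precisely the pasture on the trivial group, and is mentioned in the introduction as existing, cf.\ \cite{baker2020foundations}); combined with fibered products from Theorem 2.5, this already gives all finite limits, but we do not even need that refinement.

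The key steps, in order, would be: (1) Let $D\colon \mathcal{J}\to \mathbf{Past}$ be a functor from a small category $\mathcal{J}$. Form the two products $A=\prod_{j\in \mathrm{ob}(\mathcal{J})}D(j)$ and $B=\prod_{\varphi\in \mathrm{mor}(\mathcal{J})}D(\mathrm{cod}\,\varphi)$, which exist by Theorem 6.5 since $\mathrm{ob}(\mathcal{J})$ and $\mathrm{mor}(\mathcal{J})$ are sets. (2) Define two morphisms $s,t\colon A\to B$: the morphism $s$ has $\varphi$-component (for $\varphi\colon j\to k$) equal to the projection $\pi_k\colon A\to D(k)$, while $t$ has $\varphi$-component equal to $D(\varphi)\circ\pi_j$; both are morphisms of pastures by the universal property of the product $B$, using that composites of pasture morphisms are pasture morphisms. (3) Let $q\colon Q\to A$ be the equalizer of $s$ and $t$, which exists by Theorem 4.4. (4) Verify that $Q$, equipped with the cone whose $j$-th leg is $\pi_j\circ q\colon Q\to D(j)$, is a limit of $D$: the equation $s\circ q=t\circ q$ says exactly that for every $\varphi\colon j\to k$ we have $D(\varphi)\circ(\pi_j\circ q)=\pi_k\circ q$, i.e.\ the legs form a cone; and given any other cone $(P',\,(g_j)_j)$, the induced map $P'\to A$ into the product equalizes $s$ and $t$ (again because the $g_j$ form a cone), hence factors uniquely through $q$, giving the required unique factorization $P'\to Q$.

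The only genuinely non-routine point is step (4)'s verification that the universal property of $Q$ really is the limit universal property; everything else is an assembly of the universal properties already established. I do not expect any obstacle here beyond bookkeeping, so rather than reprove the general categorical lemma I would simply state it and cite it.

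\begin{proof}
By Theorem 6.5 the category of pastures has arbitrary (small) products, and by Theorem 4.4 it has equalizers. It is a standard fact that any category possessing all small products and all equalizers possesses all limits of diagrams indexed by a small category: given a functor $D\colon\mathcal{J}\to\mathbf{Past}$ with $\mathcal{J}$ small, one forms the products $A=\prod_{j\in\mathrm{ob}(\mathcal{J})}D(j)$ and $B=\prod_{(\varphi\colon j\to k)\in\mathrm{mor}(\mathcal{J})}D(k)$, the two morphisms $s,t\colon A\to B$ determined on the $\varphi$-component by $\pi_k$ and by $D(\varphi)\circ\pi_j$ respectively, and takes the equalizer $q\colon Q\to A$ of $s$ and $t$; then $Q$ together with the cone $(\pi_j\circ q)_{j\in\mathrm{ob}(\mathcal{J})}$ is a limit of $D$. (Alternatively, one may use fibered products from Theorem 2.5 together with a terminal object and arbitrary products; see for example section 2 of chapter 5 of \cite{mac2013categories}.) This proves the claim.
\end{proof}
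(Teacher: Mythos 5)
Your proposal is correct and takes essentially the same route as the paper, which likewise just invokes the standard criterion that a category with all small products and equalizers has all small limits, citing section 2 of chapter 5 of \cite{mac2013categories}. The only difference is that you additionally unwind the equalizer-of-two-maps-between-products construction (and correctly observe that fibered products are not actually needed), whereas the paper leaves all of this to the reference.
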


\section{Arbitrary Coproducts}

In this section, we shall show that arbitrary coproducts exist in the category of pastures. Thus, if $I$ is some indexing set, and for each $i\in I$ we have some pasture $P_i$. Then the coproduct of the $P_i$ is some pasture $\otimes_{i\in I}P_i$ along with maps $i_j: P_j\rightarrow \otimes_{i\in I} P_i$ for $j\in I$. Such that for any pasture $P'$ with maps $f_i:P_i\rightarrow P'$ for each $i\in I$, then there is a unique morphism $g:\otimes_{i\in I} P_i\rightarrow P'$ such that $g\circ i_j=f_j$ for each $j\in I$. 

\begin{defn}
Given some indexing set $I$, and for each $i\in I$ we have a pasture $P_i$, then we have that the coproduct $\otimes_{i\in I} P_i$ is a pasture. As a set, we have that $\otimes_{i\in I} P_i=\{0\}\cup(\oplus_{i\in I}(P_i^\times)/\sim)$, we will say that $0=[(0)_{i\in I}]$, and all other elements will look like $[(x_i)_{i\in I}]$ such that all $x_i=1_i$ except for finitely many $x_i$. Where we have that $\sim$ is the equivalence relation given by $(x_i)_{i\in I}\sim (y_i)_{i\in I}$ if and only if there is an even number of indices say $i_1,i_2,...i_{2n}\in I$ such that for all other indices $j$ we have that $x_j=y_j$ and for these indices we have that $x_{i_k}=-y_{i_k}$ for $k=1,..,2n$. We now will define the multiplication on $\otimes_{i\in I} P_i$ as $[(x_i)_{i\in I}][(y_i)_{i\in I}]=[(x_iy_i)_{i\in I}]$. Now the involution will be given as follows $-[(x_i)_{i\in I}]=[(y_i)_{i\in I}]$ where $(y_i)_{i\in I}$ is given by picking some $j\in I$ such that $x_j=-y_j$ and $x_i=y_i$ for all $i\in I$ with $i\neq j$. Finally, we define the additive relations (for nonzero elements) as follows $[(x_i)_{i\in I}]+[(y_i)_{i\in I}]+[(z_i)_{i\in I}]=0$ if and only if there are $[(x_i')_{i\in I}]\sim [(x_i)_{i\in I}]$, $[(y_i')_{i\in I}]\sim [(y_i)_{i\in I}]$, and $[(z_i')_{i\in I}]\sim [(z_i)_{i\in I}]$ such that there is some index $j\in I$ such that $x'_j+y'_j+z'_j=0$ in $P_j$, and for each $i\neq j$ we have that  $x_i'=y_i'=z_i'$. Furthermore, the additive relations with $[0]$ are given by permutations of elements of the form $[(x_i)_{i\in I}]+[(y_i)_{i\in I}]+[(0)_{i\in I}]=0$ if and only if $ [(x_i)_{i\in I}]=-[(y_i)_{i\in I}]$. Then for each $j\in I$ we define the maps $i_j:P_j\rightarrow \otimes_{i\in I} P_i$ by $i_j(0)=[0]$ and  $i_j(x)=[(y_i)_{i\in I}]$ where $y_i=1_i$ for all $i\in I$ such that $i\neq j$, and $y_j=x$. 
\end{defn}

We will begin by showing that $\sim$ is actually an equivalence relation. Then we shall show that $\otimes_{i\in I}P_i$ is a pasture, and for $j\in I$ we have that $i_j:P_j\rightarrow \otimes_{i\in I}P_i$ is a morphism of pastures. Lastly, we will show that $\otimes_{i\in I}P_i$ satisfies the universal property of being a coproduct of the $P_i$.

\begin{lemma}
There is an equivalence relation on $\oplus_{i\in I}P_i^\times$ given by $(x_i)_{i\in I}\sim (y_i)_{i\in I}$ if and only if there is an even number of indices say $i_1,i_2,...i_{2n}\in I$ such that for all other indices $j$ we have that $x_j=y_j$ and for these indices we have that $x_{i_k}=-y_{i_k}$ for $k=1,..,2n$.
\end{lemma}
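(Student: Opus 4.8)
The plan is to verify the three axioms of an equivalence relation --- reflexivity, symmetry, and transitivity --- directly from the definition. It is convenient to record a witness of $(x_i)_{i\in I}\sim(y_i)_{i\in I}$ as a finite subset $S\subseteq I$ of even cardinality with $x_i=-y_i$ for $i\in S$ and $x_i=y_i$ for $i\notin S$ (this is the set $\{i_1,\dots,i_{2n}\}$ in the statement; it is finite by the phrasing, and is automatically so since both tuples lie in $\oplus_{i\in I}P_i^\times$). Two facts carry the argument: that $x\mapsto -x$ is an involution on each $P_i$, so $-(-a)=a$, and that the symmetric difference of two finite sets of even size is again finite of even size. (Conceptually, $\sim$ is the coset relation for the subgroup of $\oplus_{i\in I}P_i^\times$ consisting of tuples with entries in $\{1_i,-1_i\}$ and evenly many entries equal to $-1_i$, but the direct check is cleaner to write out.)

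Reflexivity follows by taking $S=\emptyset$, which has even cardinality $0$ and trivially satisfies $x_i=x_i$ for all $i$. For symmetry, suppose $(x_i)_{i\in I}\sim(y_i)_{i\in I}$ with witness $S$; applying the involution to the identity $x_i=-y_i$ for $i\in S$ yields $y_i=-x_i$, and $y_i=x_i$ off $S$, so the same set $S$ witnesses $(y_i)_{i\in I}\sim(x_i)_{i\in I}$.

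Transitivity is the one step with any content, and is where I expect the (mild) difficulty to lie. Given witnesses $S$ for $(x_i)_{i\in I}\sim(y_i)_{i\in I}$ and $T$ for $(y_i)_{i\in I}\sim(z_i)_{i\in I}$, I would show that $U=(S\cup T)\setminus(S\cap T)$ witnesses $(x_i)_{i\in I}\sim(z_i)_{i\in I}$. Splitting into the four cases according to whether an index $i$ lies in $S$ and in $T$: on $S\cap T$ one computes $x_i=-y_i=-(-z_i)=z_i$; on $S\setminus T$, $x_i=-y_i=-z_i$; on $T\setminus S$, $x_i=y_i=-z_i$; and off $S\cup T$, $x_i=y_i=z_i$. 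Hence $x_i=-z_i$ exactly on $U$ and $x_i=z_i$ elsewhere, and $U$ is finite (being contained in $S\cup T$) with $|U|=|S|+|T|-2\,|S\cap T|$ even. Thus $U$ is a legitimate witness and $\sim$ is transitive, completing the proof that $\sim$ is an equivalence relation. The only points needing care are the use of $-(-a)=a$ in the $S\cap T$ case and the parity count for $|U|$; everything else is bookkeeping.
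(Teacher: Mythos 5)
Your proof is correct and follows essentially the same route as the paper: the empty witness set for reflexivity, the involution $-(-a)=a$ for symmetry, and the symmetric difference of the two witness sets for transitivity. Your parity count $|U|=|S|+|T|-2|S\cap T|$ is a cleaner way to package the paper's case analysis on whether the overlap of the two index sets is odd or even, but the underlying argument is identical.
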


\begin{proof}
It is fairly clear that $\sim$ is reflexive since for $(x_i)_{i\in I}\in \oplus_{i\in I}P_i^\times$ we have that for each $i\in I$ that $x_i=x_i$, so $\sim$ is reflexive (just taking the empty set to be the indices which we involute). 

Now to see that $\sim$ is symmetric, if we have that $(x_i)_{i\in I}\sim (y_i)_{i\in I}$, then we have that we have that there are an even number of indices say $i_1, i_2,...,i_{2n}\in I$ such that $x_{i_k}=-y_{i_k}$ for $k=1,...,2n$, and $x_j=y_j$ for all other indices. However, this is equivalent to $y_{i_k}=-x_{i_k}$ for $i=1,...,2n$ and $y_j=x_j$ for all other indices which implies that $(y_i)_{i\in I}\sim (x_i)_{i\in I}$, so $\sim$ is symmetric. 

Finally, let us show that $\sim$ is transitive. Say that $(x_i)_{i\in I}\sim (y_i)_{i\in I}$ and $(y_i)_{i\in I}\sim (z_i)_{i\in I}$. Thus, we have that there are an even number of indices say $i_1,i_2,...,i_{2n}\in I$ such that $x_{i_k}=-y_{i_k}$ for $k=1,..,2n$ and $x_j=y_j$ for all other indices. Similarly, we have that there are an even number of indices say $j_1,j_2,...,j_{2m}\in I$ such that $y_{j_l}=-z_{j_l}$ for all $l=1,...,2m$ and $y_k=z_k$ for all other $k\in I$. Now if we have some index $i\neq i_k,j_l$ for any $k=1,...,2n$ and $l=1,...,2m$ we have that $x_i=y_i$ and $y_i=z_i$, so $x_i=z_i$. Now if we consider the remaining indices, then we shall consider two cases depending on the parity of the number of indices $i_k$ and $j_l$ which are the same. The first case is that if we have that an odd number of indices $i_k$ and $j_l$ are the same. In this case let us say that we have $k_1,...,k_{2p+1}$ are indices both of the form $i_k$ and $j_l$, so we have that $x_{k_q}=-y_{k_q}=z_{k_q}$ for $q=1,2,...,2p+1$. Then we have that there will be an odd number of the $i_k$ which are not $k_q$, and an odd number of $j_l$ which are not $k_q$, and of these we will have that $x_{i_k}=-z_{i_k}$ and $x_{j_l}=-z_{j_l}$, and there are an even number of such indices, so we have that $(x_i)_{i\in I}\sim (z_i)_{i\in I}$. Similarly, in the case that we have $k_1,...,k_{2p}$ are the number of indices both of the form $i_k$ and $j_l$, then for these indices we have that $x_{k_q}=-y_{k_q}=z_{k_q}$ for $q=1,...,2p$. Then we have that there will be an even number of the $i_k$ which are not $k_q$ and an even number of the $j_;$ which are nor $k_q$, and of these we have that $x_{i_k}=-z_{i_k}$ and $x_{j_l}=-z_{j_l}$, and of there are an even number of such indices, so we have that $(x_i)_{i\in I}\sim (z_i)_{i\in I}$. Thus, we have that $\sim$ is transitive and an equivalence relation. 
\end{proof}

\begin{lemma}
The arbitrary coproduct $\otimes_{i\in I}P_i$ is indeed a pasture, and for $j\in I$ we have that the map $i_j:P_j\rightarrow \otimes_{i\in I}P_i$ is a morphism of pastures.  
\end{lemma}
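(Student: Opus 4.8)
The plan is to break the statement into its two halves, namely that $\otimes_{i\in I}P_i$ is a pasture and that each $i_j$ is a morphism, and within the first half to treat in turn the abelian group structure on the nonzero elements, the involution, and the three nullset axioms. The conceptual simplification I would make at the outset is to observe that $(\otimes_{i\in I}P_i)^\times$ is literally a quotient group. Inside the abelian group $\oplus_{i\in I}P_i^\times$ of finitely supported tuples, let $E$ denote the set of tuples all of whose coordinates lie in $\{1_i,-1_i\}$ and which have an even number of coordinates equal to $-1_i$. Then $(x_i)_{i\in I}\sim(y_i)_{i\in I}$ is exactly the assertion that $(x_iy_i\inv)_{i\in I}\in E$, so $\sim$ is the coset relation of $E$. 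The only parity computation needed here is that $E$ is a subgroup: a product of two such tuples again has all coordinates in $\{1_i,-1_i\}$, and if the factors have $2n$ and $2m$ coordinates equal to $-1$ with $t$ in common then the product has $2n+2m-2t$ such coordinates, which is even; and each $-1_i$ is its own inverse since $(-1_i)(-1_i)=-(-1_i)=1_i$ in the pasture $P_i$.

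Granting this, the group part is immediate: multiplication on classes is well defined because $E$ is a subgroup, $(\otimes_{i\in I}P_i)^\times=(\oplus_{i\in I}P_i^\times)/E$ is abelian with identity $[(1_i)_{i\in I}]$ and with $[(x_i\inv)_{i\in I}]$ inverse to $[(x_i)_{i\in I}]$, and $[0]$ is absorbing by fiat. For the involution I would argue that $-[(x_i)_{i\in I}]=[(y_i)_{i\in I}]$, where $(y_i)_{i\in I}$ differs from $(x_i)_{i\in I}$ only in one coordinate $j$ with $y_j=-x_j$, is well defined: changing the representative by an element of $E$, or moving the distinguished coordinate from $j$ to another coordinate $k$, alters the resulting tuple by a tuple having exactly two coordinates equal to $-1$, hence by an element of $E$. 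It is an involution because flipping the same coordinate twice returns the original tuple, it fixes $0$ by definition, and it maps $(\otimes_{i\in I}P_i)^\times$ into itself since $-x_j\in P_j^\times$.

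Next I would verify the three properties of the nullset $N=N_{\otimes_{i\in I}P_i}$, first noting that $N$ is well defined because its generic clause is phrased as an existential over representatives. Property (1) holds because both the generic clause (existence of representatives agreeing off a single coordinate $j$ with $x'_j+y'_j+z'_j=0$ in $P_j$) and the clause involving $[0]$ are visibly symmetric in the three entries. For property (2), suppose $[(x_i)_{i\in I}]+[(y_i)_{i\in I}]+[(z_i)_{i\in I}]=0$ via representatives $(x'_i)_{i\in I},(y'_i)_{i\in I},(z'_i)_{i\in I}$ agreeing off a coordinate $j$; for any $[(d_i)_{i\in I}]$ the tuples $(d_ix'_i)_{i\in I},(d_iy'_i)_{i\in I},(d_iz'_i)_{i\in I}$ still agree off $j$, and $d_jx'_j+d_jy'_j+d_jz'_j=0$ in $P_j$ because $P_j$ is a pasture, so the product relation holds; the case in which one summand is $[0]$ reduces to the identity $-([(d_i)_{i\in I}][(x_i)_{i\in I}])=[(d_i)_{i\in I}](-[(x_i)_{i\in I}])$, valid because $-(d_ix_i)=(-d_i)x_i$ in each $P_i$. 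For property (3), the $[0]$-clause says precisely that $a+b+0=0$ iff $a=-b$, and there is nothing to reconcile with the generic clause, since $[0]$ has no representative in $\oplus_{i\in I}P_i^\times$ and so cannot occur among the three entries of a generic relation; the relation $0+0+0=0$ is the special case of the $[0]$-clause with both other entries equal to $[0]$. This completes the argument that $\otimes_{i\in I}P_i$ is a pasture.

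Finally, for fixed $j\in I$ the map $i_j$ sends $0$ to $[0]$ and $1_j$ to $[(1_i)_{i\in I}]=1$; it is multiplicative and commutes with the involution straight from the coordinatewise definitions, taking the distinguished coordinate in $-i_j(x)$ to be $j$; and it preserves additive relations, since if $a+b+c=0$ in $P_j$ with $a,b,c\in P_j^\times$ then the canonical representatives of $i_j(a),i_j(b),i_j(c)$ agree off coordinate $j$ and sum to $0$ there, while if one of $a,b,c$ is $0$ the relation in $P_j$ forces the remaining two to be negatives of one another, so the image relation is an instance of the $[0]$-clause, and $0+0+0=0$ maps to $[0]+[0]+[0]=0$. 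I expect the only real friction to be the parity bookkeeping needed to see that $E$ is a subgroup and that the involution is independent of all choices, together with the slightly fussy treatment of the degenerate additive relations involving $[0]$; the quotient-group reformulation is what keeps these from becoming genuinely delicate.
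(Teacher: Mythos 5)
Your proposal is correct, and it follows the same overall skeleton as the paper (group structure on the nonzero part, then the involution, then the three nullset axioms, then the verification that each $i_j$ is a morphism), but it differs in one genuinely useful way: you reformulate the equivalence relation as the coset relation of the subgroup $E\subseteq\oplus_{i\in I}P_i^\times$ of finitely supported sign tuples with an even number of coordinates equal to $-1_i$. The paper instead works with the relation directly, so it must repeat essentially the same parity bookkeeping twice, once to prove transitivity of $\sim$ (in the preceding lemma) and once to prove that multiplication descends to equivalence classes; your observation that $(x_i)\sim(y_i)$ iff $(x_iy_i\inv)_{i\in I}\in E$ collapses both of these into the single check that $E$ is a subgroup (using $(-1_i)(-1_i)=1_i$ and the computation $2n+2m-2t$), after which well-definedness of multiplication and the abelian group structure of $(\otimes_{i\in I}P_i)^\times=(\oplus_{i\in I}P_i^\times)/E$ are automatic. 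You are also somewhat more careful than the paper on two points: you verify that the involution is independent both of the representative and of the choice of distinguished coordinate (the paper only checks that applying it twice is the identity), and you explicitly track the degenerate additive relations involving $[0]$, both in nullset axioms (2) and (3) and in the additive-relation check for $i_j$. The trade-off is that the paper's direct computation stays closer to the bare definition and requires no auxiliary subgroup, while your route is shorter in total and localizes all the parity arguments in one place; both are valid proofs of the lemma.
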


\begin{proof}
We begin by showing that multiplication is well-defined. To see this say that we have that we have $(x_i)_{i\in I}\sim (x_i')_{i\in I}$ and $(y_i)_{i\in I}\sim (y_i')_{i\in I}$. Say that $i_1,...,i_{2m}$ are the indices such that $x_{i_k}=-x'_{i_k}$ for $k=1,...,2m$. Similarly, let $j_1,...,j_{2n}$ be the indices such that $y_{j_k}=-y'_{j_k}$ for $j=1,...,2n$. We note for any index not equal to any of the $i_k$, $j_k$ we will have that $x_iy_i=x_i'y_i'$. Furthermore, of the indices $i_1,...,i_{2m}$ and $j_1,...,j_{2n}$ if there is overlap of indices say $k_1,...,k_l$, then we will have that $x_{k_i}y_{k_i}=x'_{k_i}y'_{k_i}$ while the remaining indices will be those with $x_iy_i=-x'_iy'_i$. And as in the transitivity of $\sim$ argument, we will have that there will be an even number such indices that is we will have that $(x_i)_{i\in I}(y_i)_{i\in I}=(x'_i)_{i\in I}(y'_i)_{i\in I}$. That is we have that multiplication is well-defined. 

Now we note that $[0]$ is an absorbing element under multiplication. Now since we have that multiplication is well-defined, it is clear that it will be a closed operation. The identity element will be $[(1_i)_{i\in I}]$ since we have that given any $[(x_i)_{i\in I}]\in \otimes_{i\in I}P_i$, we have that $[(1_i)_{i\in I}][(x_i)_{i\in I}]=[(x_i)_{i\in I}]$, so we have an identity element. Now for inverses, say that we have $[(x_i)_{i\in I}]$, and we have that $i_1,...,i_n$ are the indices such that for $j\neq i_k$ we have that $x_j=1$. Then we will have that $[(x_i)_{i\in I}]\inv=[(x_i\inv)_{i\in I}]$ since $1_j\inv=1_j$ we have that this will indeed be an element which is clearly a multiplicative inverse. Thus, we have that $(\otimes_{i\in I}P_i)^\times$ will be an abelian group. 

Now to see that the involution is indeed an involution. We note that given $[(x_i)_{i\in I}]$, then we have that $-[(x_i)_{i\in I}]=[(y_i)_{i\in I}]$ chooses some index say $j$ and sets $y_j=-x_j$ and $y_i=x_i$ for all $i\neq j$. We note that involuting again gives us $--[(x_i)_{i\in I}]=[(z_i)_{i\in I}]$ twice corresponds to picking two indices say $j,k$ and setting $z_j=-x_j$, $z_k=-x_k$, and $z_i=x_i$ for all $i\neq j,k$. Thus, we see that $--[(x_i)_{i\in I}]=[(x_i)_{i\in I}]$ by the definition of the equivalence relation. Thus, the involution is well-defined. 

Now we show that the additive relations satisfy the axioms of the null set. We note that it is clear from the additive relations that they will be invariant under permutations. Similarly, we note that the relations involving $0$ make it such that property (3) of the null set is satisfied. Thus, for property (2), say that we have that (non-zero) $[(x_i)_{i\in I}]+[(y_i)_{i\in I}]+[(z_i)_{i\in I}]=0$ where we have chosen these representatives such that there is an index $j$ such that $x_j+y_j+z_j=0$ and $x_i=y_i=z_i$ for $i\neq j$. Then for any $[(w_i)_{i\in I}]$ if we consider 
\[
[(x_i)_{i\in I}][(w_i)_{i\in I}]+[(y_i)_{i\in I}][(w_i)_{i\in I}]+[(z_i)_{i\in I}][(w_i)_{i\in I}]=0
\]
Since the $j$th index will be $x_jw_j+y_jw_j+z_jw_j=0$, and for $i\neq j$ we have that $x_iw_i=y_iw_i=z_iw_i$. Thus, we have that property of the null set (2) is satisfied. we note in the case where there is a zero element in the sum, a similar proof holds. Thus, the additive relations are satisfied, and we have that $\otimes_{i\in I}P_i$ is a pasture. 

Now to see that for each $j\in I$ we have that the function $i_j:P_j\rightarrow \otimes_{i\in I}P_i$ is a morphism of pastures. First it is clear that $i_j(0)=0$ and $i_j(1)=[(1_i)_{i\in I}]$. To see that $i_j$ respects multiplication we note that for $x,y\in P_j$, we have that $i_j(xy)=[(z_i)_{i\in I}]$ where $z_j=xy$ and $z_i=1_1$ for every $i\neq j$. Furthermore, we note that $i_j(x)=[(x_i)_{i\in I}]$ where $x_j=x$ and $x_i=1_i$ for $i\neq j$. Similarly, $i_j(y)=[(y_i)_{i\in I}]$ where $y_j=y$ and $y_i=1_i$ for $i\neq j$. From this it is clear that $i_j(xy)=i_j(x)i_j(y)$. Thus, we have that $i_j$ respects multiplication. 

Now to see that $i_j$ respects the additive relations, we note that if $x+y+z=0$ in $P_j$, then we have that $i_j(x)=[(x_i)_{i\in I}]$ where $x_j=x$ and $x_i=1_i$ for $i\neq j$, and similarly for $i_j(y)$ and $i_j(z)$. Thus, if we consider $i_j(x)+i_j(y)+i_j(z)=0$ since we have for all $i\neq j$ we have that $x_i=y_i=z_i=1_i$, and for the index $j$ we have that $x_j+y_j+z_j=x+y+z=0$. Thus, we have that $i_j$ respects the additive relations, and is a morphism of pastures. 
\end{proof}

\begin{lemma}
If $P$ is a pasture and for each $i\in I$ we have that $f_i:P_i\rightarrow P$ is a morphism of pastures, then there exists a unique map $g:\otimes_{i\in I}P_i\rightarrow P_i$ such that for each $j\in I$ we have that $f_j=g\circ i_j$ that is $\otimes_{i\in I}P_i$ satisfies the universal property of the coproduct. 
\end{lemma}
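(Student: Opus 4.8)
The plan is to define $g$ by the evident ``multiply all the coordinates'' formula and then check, in order, that it is well-defined on equivalence classes, that it is a morphism of pastures, that it makes the coproduct triangles commute, and that it is the unique such morphism. Concretely: put $g([0])=0$, and for a nonzero element $[(x_i)_{i\in I}]$ (so $x_i=1_i$ for all but finitely many $i$, and each $x_i\in P_i^\times$) put $g([(x_i)_{i\in I}])=\prod_{i\in I}f_i(x_i)$. Because $f_i$ is a morphism we have $f_i(1_i)=1$, so only finitely many factors differ from $1$ and the product is an honest finite product in the abelian group $P^\times$; it lies in $P^\times$ since each $f_i$ carries units to units (if $x_i\in P_i^\times$ then $f_i(x_i)f_i(x_i\inv)=f_i(1)=1$). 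Commutativity of $P^\times$ makes the unordered product over $I$ unambiguous.

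First I would verify that $g$ is well-defined. If $(x_i)_{i\in I}\sim (y_i)_{i\in I}$, the tuples differ in an even number $2n$ of coordinates, where $y_{i_k}=-x_{i_k}$. Using $f_i(-a)=-f_i(a)$ and the identity $(-1)(-1)=1$ valid in any pasture (multiply the relation $1+(-1)+0=0$ by $-1$, then apply property (3)), we get $\prod_{i\in I}f_i(y_i)=(-1)^{2n}\prod_{i\in I}f_i(x_i)=\prod_{i\in I}f_i(x_i)$. Next I would check the morphism axioms: $g([(1_i)_{i\in I}])=\prod_i f_i(1_i)=1$ and $g([0])=0$; multiplicativity on nonzero classes follows from multiplicativity of each $f_i$ and commutativity of $P^\times$; and the involution is respected since changing one coordinate's sign multiplies the product by $-1$. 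For additive relations, suppose $[(x_i)]+[(y_i)]+[(z_i)]=0$ with all three classes nonzero; by the defining clause of the nullset of $\otimes_{i\in I}P_i$ choose representatives $(x_i'),(y_i'),(z_i')$ agreeing off a single index $j$ and satisfying $x_j'+y_j'+z_j'=0$ in $P_j$. Set $c=\prod_{i\neq j}f_i(x_i')=\prod_{i\neq j}f_i(y_i')=\prod_{i\neq j}f_i(z_i')$; since $f_j$ is a morphism, $f_j(x_j')+f_j(y_j')+f_j(z_j')=0$ in $P$, and multiplying by $c$ (property (2) of the nullset) gives $g([(x_i')])+g([(y_i')])+g([(z_i')])=0$, which by well-definedness of $g$ is exactly $g([(x_i)])+g([(y_i)])+g([(z_i)])=0$. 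The relations involving $[0]$ follow, via property (3) in $P$, from the already-checked identity $g(-[(x_i)])=-g([(x_i)])$.

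Then I would verify commutativity: for $x\in P_j$, $i_j(x)$ is the class of the tuple with $x$ in slot $j$ and $1_i$ elsewhere, so $g(i_j(x))=f_j(x)\prod_{i\neq j}f_i(1_i)=f_j(x)$, and $g(i_j(0))=g([0])=0=f_j(0)$; hence $g\circ i_j=f_j$ for every $j$. For uniqueness, note that an arbitrary nonzero element $[(x_i)_{i\in I}]$ supported on indices $i_1,\dots,i_n$ factors in $\otimes_{i\in I}P_i$ as $i_{i_1}(x_{i_1})\cdots i_{i_n}(x_{i_n})$, since multiplying these tuples places $x_{i_k}$ in slot $i_k$ and leaves $1_i$ elsewhere. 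Thus any morphism $g'$ with $g'\circ i_j=f_j$ for all $j$ satisfies $g'([(x_i)])=\prod_k f_{i_k}(x_{i_k})=g([(x_i)])$ by multiplicativity, and $g'([0])=0$; so $g'=g$.

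The step I expect to be most delicate is showing that $g$ preserves additive relations: one must first invoke the defining clause of the nullset of $\otimes_{i\in I}P_i$ to pass to the correct representatives (agreeing off one index), then use well-definedness of $g$ to transfer the computation back, all while keeping track of the distinguished-index and even-sign-flip bookkeeping inherited from the equivalence relation — essentially the same parity argument used earlier to show multiplication on $\otimes_{i\in I}P_i$ is well-defined. The only other point requiring care is legitimizing the product over the possibly infinite set $I$, which is handled by observing all but finitely many factors equal $1$ and that $P^\times$ is abelian.
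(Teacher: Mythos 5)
Your proposal is correct and takes essentially the same route as the paper's proof: the same definition of $g$ as the finite product $\prod f_i(x_i)$ over the support, the same parity argument for well-definedness, the same choice of representatives agreeing off a single index to verify the additive relations, and the same factorization $[(x_i)_{i\in I}]=i_{i_1}(x_{i_1})\cdots i_{i_n}(x_{i_n})$ for uniqueness. Your explicit justifications of $(-1)(-1)=1$ and of the zero-relation case via property (3) are slightly more careful than the paper's, but the argument is the same.
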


\begin{proof}
We shall begin by defining $g:\otimes_{i\in I}P_i\rightarrow P$. For $[(x_i)_{i\in I}]\in \otimes_{i\in I}P_i$, we know that all $x_i=1_i$ except for finitely many say for $i_1,...i_n\in I$ we have that $x_{i_j}\neq 1_{i_j}$ and for the remaining indices $x_i=1_i$. Thus, we define $g([(x_i)_{i\in I}])=f_{i_1}(x_{i_1})f_{i_2}(x_{i_2})...f_{i_n}(x_{i_n})$. We make the note that if we also include indices $j\in I$ such that $x_j=1_j$, then as $f_j(1_j)=1$ adding this to the product will not change it, so it is helpful to allow some of the indices to be $1$. In the case of where $x_i=1_i$ for each $i\in I$ we define $g([(1_i)]_{i\in I})=1$, and we also define $g(0)=0$. 

We first will show that $g$ is well-defined. That is if we have that $[(x_i)_{i\in I}]\sim [(y_i)_{i\in I}]$, then $g([(x_i)_{i\in I}])=g([(y_i)_{i\in I}])$.

Let $i_1,...,i_m$ be the indices where either $x_{i_l}\neq 1$ or $y_{i_l}\neq 1$. Furthermore, since $[(x_i)_{i\in I}]\sim [(y_i)_{i\in I}]$, we have an even number of indices just order them say $i_1,...,i_{2n}$ are such that $x_{i_k}=-y_{i_k}$ for $k=1,...,2n$. Then we have that 
\[
g([(x_i)_{i\in I}])=(\prod_{k=1}^{2n}f_{i_k}(x_{i_k}))(\prod_{k=2n+1}^mf_{ik}(x_{i_k}))=(\prod_{k=1}^{2n}f_{i_k}(-y_{i_k}))(\prod_{k=2n+1}^mf_{ik}(y_{i_k}))=f([(y_i)_{i\in I}])
\]
Thus, we have that $g$ is a well-defined function. 

We now show that $g$ is a morphism of pastures. We have by definition that $g([(0)_{i\in I}])=0$, and $g([(1_i)_{i\in I}])=1$. To see that $g$ is multiplicative say that we have $[(x_i)_{i\in I}],[(y_i)_{i\in I}]\in \otimes_{i\in I}P_i$, and let $i_1,...,i_n$ be indices such that either $x_{i_k}\neq 1_{i_k}$ or $y_{i_k}\neq 1_{i_k}$ for $k=1,...,n$, and $x_j=y_j=1_j$ for $j\neq i_k$. Then we have that 
\[
g([(x_i)_{i\in I}][(y_i)_{i\in I}])=g([(x_iy_i)_{i\in I}])=f_{i_1}(x_{i_1}y_{i_1})...f_{i_n}(x_{i_n}y_{i_n})
\]
\[
=f_{i_1}(x_{i_1})f_{i_1}(y_{i_1})...f_{i_n}(x_{i_n})f_{i_n}(y_{i_n})=g([(x_i)_{i\in I}])g([(y_i)_{i\in I}])
\]
Thus, we have that $g$ preserves the multiplication. Now to see that $g$ preserves the involution say that we have $[(x_i)_{i\in I}]\in \otimes_{i\in I}P_i$, and say that $i_1,...,i_n$ are the indices such that $x_{i_k}\neq 1_{i_k}$. Then we have that $-[(x_i)_{i\in I}]=[(y_i)_{i\in I}]$ where say $x_{i_1}=-y_{i_1}$ and $x_j=y_j$ for all other $j\neq i_1$. Then we have that 
\[
-g([(x_i)_{i\in I}])=-f_{i_1}(x_{i_1})...f_{i_n}(x_{i_n})=f_{i_1}(-x_{i_1})...f_{i_n}(x_{i_n})=f_{i_1}(y_{i_1})...f_{i_n}(y_{i_n})=g([(y_i)_{i\in I}])
\]
Thus, we have that $g$ preserves the involution. 

Finally, to see that the additive relations are preserved. That is let us say that we have $[(x_i)_{i\in I}],[(y_i)_{i\in I}],[(z_i)_{i\in I}]\in\otimes_{i\in I}P_i$ such that $[(x_i)_{i\in I}]+[(y_i)_{i\in I}]+[(z_i)_{i\in I}]=0$. Now we wish to show that $g([(x_i)_{i\in I}])+g([(y_i)_{i\in I}])+g([(z_i)_{i\in I}])=0$. To see this we have that there is some index $j$ such that $x_j+y_j+z_j=0$ in $P_j$, so we have that $f_j(x_j)+f_j(y_j)+f_j(z_j)=0$ in $P'$. Let us first make the assumption that each of these values are nonzero, then for all other indices $i\neq j$ we have that $x_i=y_i=z_i$. Now if we consider the values say $j,i_1,i_2,...,i_n$ in which we have that each of the $x_{i_k},y_{i_k},z_{i_k}\neq 1_{i_k}$ for $k=1,..,n$. Then we have that 
\[
g([(x_i)_{i\in I}])+g([(y_i)_{i\in I}])+g([(z_i)_{i\in I}])
\]
\[
=f_j(x_j)f_{i_1}(x_{i_1})...f_{i_n}(x_{i_n})+f_j(y_j)f_{i_1}(y_{i_1})...f_{i_n}(y_{i_n})+f_j(z_j)f_{i_1}(z_{i_1})...f_{i_n}(z_{i_n})
\]
\[
=f_j(x_j)f_{i_1}(x_{i_1})...f_{i_n}(x_{i_n})+f_j(y_j)f_{i_1}(x_{i_1})...f_{i_n}(x_{i_n})+f_j(z_j)f_{i_1}(x_{i_1})...f_{i_n}(x_{i_n})=0
\]
In the case when we have that one of the elements is $0$, without loss of generality say we have $[(z_i)_{i\in I}]=[(0)_{i\in I}]$, then we have that $[(x_i)_{i\in I}]=-[(y_i)_{i\in I}]$. Then we have that 
\[
g([(x_i)_{i\in I}])+g([(y_i)_{i\in I}])+g([(0)_{i\in I}])=g([(x_i)_{i\in I}])+-g([(x_i)_{i\in I}])+g([(0)_{i\in I}])=0
\]
Thus, we have that $g$ preserves the additive relations and that $g$ is indeed a morphism of pastures. 

Now let us show that for $j\in I$ we have that $f_j=g\circ i_j$. This is clear since we have that for $x\in P_j^\times$ we have that $g(i_j(x))=g([(y_i)_{i\in I}])$ where $y_j=x$ and $y_i=1_i$ for all $i\neq j$. Thus, we have that $g([(y_i)_{i\in I}])=f_j(x)$, and we have that $g(i_j(0))=g(0)=0=f_j(0)$. Hence, we have that $f_j=g\circ i_j$ for each $j\in I$. 

Now let us show that $g$ is unique. Say that we have some other function say $h$ such that $f_j=h\circ i_j$ for each $j\in I$. let us show that $h=g$. Say that we have some element $[(x_i)_{i\in I}]\in\otimes_{i\in I}P_i$, and say that $i_1,...,i_n$ are the indices such that $x_j=1_j$ for $j\neq i_1,...,1_n$. Now since we have that $f_j=h\circ i_j$ we have that if $[(x_i)_{i\in I}]\in \otimes_{i\in I}P_i$ such that $x_j=x_j$ and $x_i=1_i$ for all $i\neq j$, then we have that $h([(x_i)_{i\in I}])=f_j(x_j)$. Now let us denote $[(y_j^{i_k})_{j\in I}]$ be such that $y_{i_k}=x_{i_k}$ and $y_j=1_j$ for all $j\neq i_k$. Then we have that $[(x_i)_{i\in I}]=\prod_{k=1}^n[(y_j^{i_k})_{j\in I}]$; thus, we have that 
\[
h([(x_i)_{i\in I}])=h(\prod_{k=1}^n[(y_j^{i_k})_{j\in I}])=\prod_{k=1}^n h([(y_j^{i_k})_{j\in I}])=\prod_{k=1}^nf_{i_k}(x_{i_k})=g([(x_i)_{i\in I}])
\]
Thus, we have that $g=h$, so $g$ must be unique. 
\end{proof}

We summarize our results as the following theorem.

\begin{theorem}
The category of pastures has arbitrary coproducts.
\end{theorem}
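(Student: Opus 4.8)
The plan is simply to assemble the three preceding lemmas into the stated conclusion. Having already exhibited the candidate object $\otimes_{i\in I}P_i$ together with the structure morphisms $i_j\colon P_j\to\otimes_{i\in I}P_i$, I would first invoke the lemma showing that $\sim$ is an equivalence relation, so that the underlying pointed set $\{0\}\cup\bigl(\oplus_{i\in I}P_i^\times/\!\sim\bigr)$ is well defined. Next I would cite the lemma establishing that $\otimes_{i\in I}P_i$ is a pasture and that each $i_j$ is a morphism of pastures; this supplies all the structural data demanded of a coproduct. Finally, the lemma verifying the universal property produces, for any pasture $P$ and family of morphisms $f_i\colon P_i\to P$, a unique morphism $g\colon\otimes_{i\in I}P_i\to P$ with $g\circ i_j=f_j$ for every $j\in I$. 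Concatenating these three statements is exactly the assertion that $\otimes_{i\in I}P_i$, equipped with the maps $i_j$, is the coproduct of the family $(P_i)_{i\in I}$ in the category of pastures; since the indexing set $I$ was arbitrary, the category has all coproducts indexed by a small category.

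The only real content — already carried out in the lemmas — is the bookkeeping with the equivalence relation: because $\sim$ identifies tuples differing by an even number of sign changes, one checks that multiplication, the involution, and the nullset all descend to the quotient, and that the assignment $g([(x_i)_{i\in I}])=\prod_k f_{i_k}(x_{i_k})$ is independent of the chosen representative. This parity argument is the same one used for transitivity of $\sim$, and it is where any subtlety lies; everything else (associativity, the unit $[(1_i)_{i\in I}]$, inverses $[(x_i\inv)_{i\in I}]$, and preservation of additive relations by $i_j$ and by $g$) reduces coordinatewise to the pasture axioms for the individual $P_i$. Uniqueness of $g$ follows, as in the universal-property lemma, from the observation that every class $[(x_i)_{i\in I}]$ factors as a finite product $\prod_k i_{i_k}(x_{i_k})$, which forces the displayed formula for $g$.

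I therefore expect no genuine obstacle here beyond citing the lemmas in the correct order; the one point worth stating explicitly in the write-up is that it is $\otimes_{i\in I}P_i$ \emph{together with} the maps $i_j$ — not merely its underlying set — that satisfies the universal property, so that the theorem is an immediate corollary of the three lemmas of this section.
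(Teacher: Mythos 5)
Your proposal is correct and matches the paper exactly: the theorem is stated there as a summary of the three preceding lemmas (the equivalence relation is well defined, $\otimes_{i\in I}P_i$ is a pasture with morphisms $i_j$, and the universal property holds), with no further argument needed. Your added remark that it is the object \emph{together with} the maps $i_j$ that constitutes the coproduct is a reasonable clarification but does not change the substance.
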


Now using Theorems 3.6, 5.5, and 7.5, we have fibered products, equalizers, and arbitrary products in the category of pastures. Thus, by the dual theorem we used to show we had limits, we have that the category of pastures  have arbitrary colimits of diagrams indexed by a small category. We state this as the following theorem. 

\begin{theorem}
The category of pastures has colimits of diagrams indexed by any small category. 
\end{theorem}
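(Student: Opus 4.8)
The plan is to dualize the argument used for Theorem~6.6. Recall the categorical criterion dual to the one invoked there (section~2 of chapter~5 of \cite{mac2013categories}): a category in which all small coproducts and all coequalizers exist admits a colimit for every functor whose source is a small category. Theorem~7.5 supplies arbitrary coproducts and Theorem~5.5 supplies coequalizers in the category of pastures, so the existence of all small colimits is immediate; the fibered coproducts of Theorem~3.6 are then recovered as a special case. Equivalently, one observes that the opposite of the category of pastures has all small products and equalizers and applies the criterion underlying Theorem~6.6 there.

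For concreteness I would also record the explicit construction behind this criterion. Given a small category $J$ and a functor $F$ from $J$ to the category of pastures, set
\[
C_0=\otimes_{j\in\mathrm{Ob}(J)}F(j),\qquad C_1=\otimes_{u\in\mathrm{Mor}(J)}F(\mathrm{dom}\,u),
\]
both of which exist by Theorem~7.5 since $\mathrm{Ob}(J)$ and $\mathrm{Mor}(J)$ are sets. Writing $\iota_j\colon F(j)\to C_0$ and $i_u\colon F(\mathrm{dom}\,u)\to C_1$ for the canonical inclusions, the universal property of the coproduct produces unique morphisms $s,t\colon C_1\to C_0$ determined by $s\circ i_u=\iota_{\mathrm{dom}\,u}$ and $t\circ i_u=\iota_{\mathrm{cod}\,u}\circ F(u)$ for each morphism $u$ of $J$. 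The colimit of $F$ is then the coequalizer $q\colon C_0\to Q$ of $s$ and $t$ furnished by Theorem~5.5, together with the legs $q\circ\iota_j\colon F(j)\to Q$.

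The remaining work is purely formal and mirrors, in dual form, the limit case: one verifies that $(q\circ\iota_j)_{j\in\mathrm{Ob}(J)}$ is a cocone under $F$ --- which amounts exactly to the identity $q\circ s=q\circ t$ defining the coequalizer --- and that it is initial among cocones under $F$, by successively unwinding the universal properties of $C_0$, $C_1$, and $Q$ established in Sections~3, 5, and~7. I do not anticipate any genuine obstacle: the only points demanding attention are the bookkeeping of which inclusions occur in the definitions of $s$ and $t$, and the remark that the smallness of $J$ keeps every coproduct in play within the scope of Theorem~7.5. These routine checks may be left to the reader or cited directly from \cite{mac2013categories}.
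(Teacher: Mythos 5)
Your proposal is correct and follows the same route as the paper: both invoke the dual of the Mac Lane criterion, combining arbitrary coproducts (Theorem~7.5) and coequalizers (Theorem~5.5) to obtain all small colimits. The explicit coequalizer-of-coproducts construction you record is the standard argument underlying that criterion, which the paper leaves implicit by citation; it is a welcome elaboration but not a different method.
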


\bibliographystyle{alpha}
\bibliography{Ref}

\end{document}